\documentclass[a4paper,11pt]{article}
\usepackage[T1]{fontenc}
\usepackage[utf8]{inputenc}
\usepackage{fullpage}

\usepackage[all=normal,bibliography=tight]{savetrees}

\usepackage{amsmath,amssymb,amsfonts,amsthm}
\usepackage[tight]{subfigure}
\usepackage{tikz}
\usepackage{todonotes}
\usepackage[absolute]{textpos}
\usepackage{enumerate}
\usepackage{paralist}
\usepackage{hyperref}
\usepackage[noabbrev,capitalize]{cleveref}
\usepackage{makecell}

\usetikzlibrary{shapes,snakes}

\newtheorem{lemma}{Lemma}[section]

\newtheorem{theorem}[lemma]{Theorem}
\newtheorem{claim}[lemma]{Claim}
\theoremstyle{definition}
\newtheorem{definition}[lemma]{Definition}

\usepackage{etoolbox}

\newcommand{\Oh}[0]{{\mathcal{O}}}

\newcommand{\LL}{\mathcal{L}}
\newcommand{\RLL}{\overleftarrow{\LL}}
\newcommand{\Kk}[0]{\mathcal{K}}
\newcommand{\Ss}[0]{\mathcal{S}}
\newcommand{\Zz}[0]{\mathcal{Z}}

\newcommand{\Ww}[0]{\mathcal{W}}

\DeclareMathOperator{\oc}{oc}

\newcommand{\df}[0]{\ensuremath{:=}}

\newcommand{\cycles}{\mathcal{C}}

\newcommand{\IntGraph}[1]{\mathrm{Int}(#1)}

\makeatletter
\newcommand\footnoteref[1]{\protected@xdef\@thefnmark{\ref{#1}}\@footnotemark}
\makeatother

\begin{document}
\title{Constant congestion brambles in directed graphs%
\thanks{This research is part of projects that have received funding from the
European Research Council (ERC) under the European Union's Horizon
2020 research and innovation programme Grant Agreement 714704.

T.M.~completed a part of this work while being supported by a postdoctoral fellowship at the Simon Fraser University through NSERC grants R611450 and R611368.

M.S.~completed a part of this work while being supported by Alexander von Humboldt foundation.}
}

\author{Tom\'{a}\v{s} Masa\v{r}\'{i}k\footnote{ Faculty of Mathematics, Informatics and Mechanics, University of Warsaw, Poland \newline and Department of Mathematics, Simon Fraser University, BC, Canada, \texttt{masarik@kam.mff.cuni.cz}} 
    \and Marcin Pilipczuk\footnote{Faculty of Mathematics, Informatics and Mechanics, University of Warsaw, Poland, \texttt{malcin@mimuw.edu.pl}}
    \and Pawe\l{} Rz\k{a}\.{z}ewski\footnote{Faculty of Mathematics and Information Science, Warsaw University of Technology, Warsaw, Poland\newline and Faculty of Mathematics, Informatics and Mechanics, University of Warsaw, Poland, \texttt{p.rzazewski@mini.pw.edu.pl}}
    \and Manuel Sorge\footnote{Faculty of Mathematics, Informatics and Mechanics, University of Warsaw, Poland\newline and TU Wien, Faculty of Informatics, Vienna, Austria, \texttt{manuel.sorge@mimuw.edu.pl}}
}
\date{}

\maketitle

\begin{abstract}
The Directed Grid Theorem, stating that there is a function $f$ such that a directed graphs of directed treewidth at least $f(k)$ contains a directed grid of size at least~$k$ as a butterfly minor, after being a conjecture for nearly 20 years, has been proven in 2015 by Kawarabayashi and Kreutzer.
However, the function $f$ obtained in the proof is very fast growing.

In this work, we show that if one relaxes \emph{directed grid} to \emph{bramble of constant congestion}, one can obtain a polynomial bound. More precisely, we show that for every $k \geq 1$ there exists $t = \Oh(k^{48} \log^{13} k)$ such that every directed graph of directed treewidth at least $t$
contains a bramble of congestion at most $8$ and size at least $k$. 


\end{abstract}

\begin{textblock}{20}(0, 11.5)
\includegraphics[width=40px]{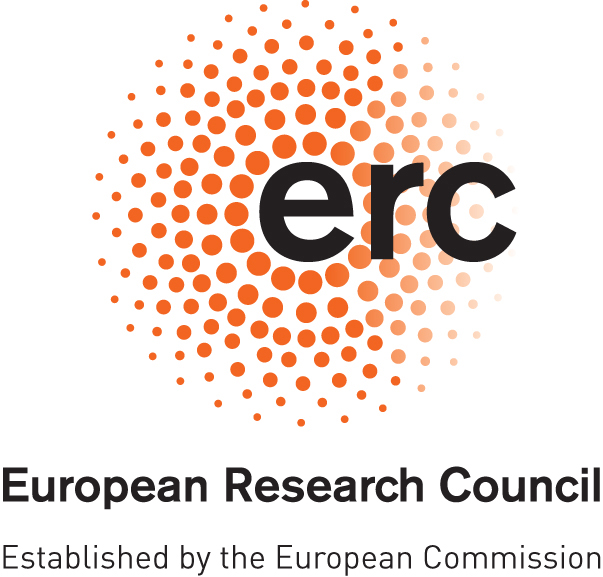}%
\end{textblock}
\begin{textblock}{20}(-0.25, 11.9)
\includegraphics[width=60px]{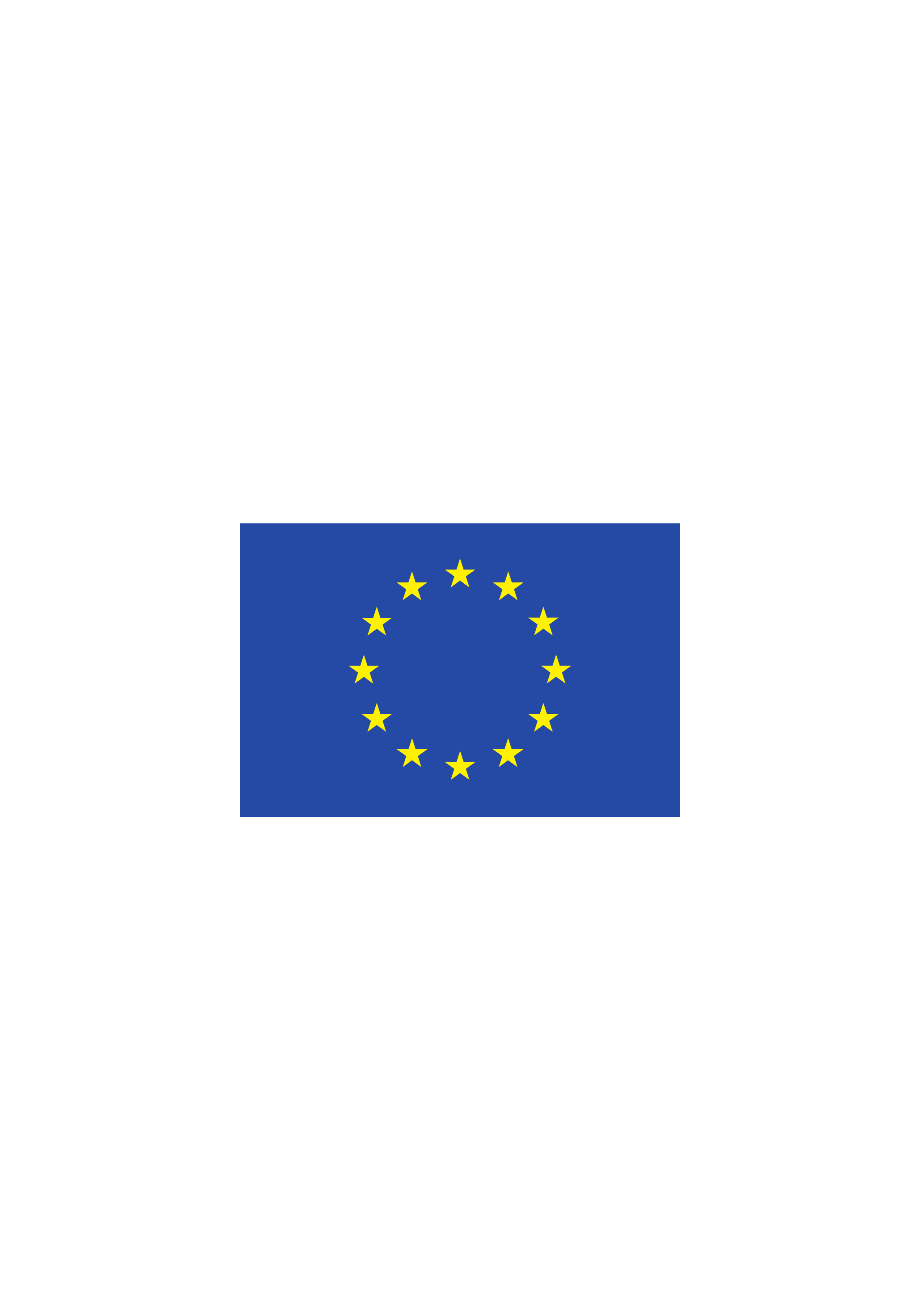}%
\end{textblock}

\section{Introduction}
The Grid Minor Theorem, proven by Robertson and Seymour~\cite{RobertsonS86}, is arguably one of the most important structural characterizations of treewidth. 
Informally speaking, it asserts that a \emph{grid minor} is a canonical obstacle to small treewidth: a graph of large treewidth necessarily contains a big grid as a minor. 
The relation of ``large'' and ``big'' in this statement, being non-elementary in the original proof, after a series of improvements has been proven to
be a polynomial of relatively small degree:
\begin{theorem}[\cite{ChuzhoyT21}]
For every $k \geq 1$ there exists $t = \Oh(k^9 \mathrm{polylog} k)$ such that
every graph of treewidth at least $t$ contains a $k \times k$ grid as a minor.
\end{theorem}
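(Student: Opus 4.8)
The plan is to route through the now-standard intermediate object of a \emph{path-of-sets system} (in the sense of Chekuri and Chuzhoy) and to compose three reductions: (i) large treewidth yields a large \emph{well-linked} set; (ii) a large well-linked set yields a path-of-sets system that is simultaneously long and wide; (iii) a sufficiently long and wide path-of-sets system can be carved into a $k \times k$ grid minor. The headline exponent then falls out by bookkeeping the polynomial loss incurred at each step. I expect step~(ii) to be the technical heart, and also the place where the sharpening over the earlier polynomial bounds (which had considerably larger exponents) is spent.

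For step~(i) I would invoke the duality between treewidth and brambles/tangles: if $\mathrm{tw}(G) \ge t$ then $G$ contains a set $X$ with $|X| = \Omega(t)$ that is well-linked, i.e.\ for every partition $(A,B)$ of $V(G)$ the minimum $A$--$B$ vertex cut has size at least $\min(|A\cap X|,|B\cap X|)$ (equivalently, any two equal-size subsets of $X$ are joined by that many internally disjoint paths). This costs only a constant factor. I would also record the routing consequence that underlies everything downstream: since the vertex sparsest-cut flow--cut gap is $O(\log|X|)$ (Feige, Hajiaghayi, Lee), a well-linked set ``routes like an expander'' --- any prescribed pairing of a linear-size subset of $X$ can be realised by internally disjoint paths after losing only a polylogarithmic factor, and any congestion-$O(\mathrm{polylog})$ routing produced along the way can be decongested at the cost of the same factor in the parameters.

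Step~(ii) is the crux. From a well-linked $X$ of size $m$ I want disjoint sets $S_1,\dots,S_\ell$, each carrying two disjoint interfaces $A_i,B_i\subseteq S_i$ of size $w$ with $A_i\cup B_i$ well-linked inside $G[S_i]$, together with $w$ vertex-disjoint $B_i$--$A_{i+1}$ paths that avoid all other $S_j$. I would build this by a recursive peeling argument: greedily attempt to split off the next cell together with its outgoing linkage; if this is impossible, the obstruction is a balanced sparse vertex separator, which either permits recursion on a still-large-treewidth piece or lets one re-pair $X$ into a more strongly linked instance, with the recursion depth kept under control. The quantitative target is that $\mathrm{tw}(G)\ge t$ forces such a system with $\ell$ and $w$ each polynomially large in $t$ with a small exponent; pushing that exponent down --- so that one does not pay a fresh polynomial factor of well-linkedness loss at every peeling step --- is exactly what Chuzhoy and Tan's more economical routing and ``maintain-well-linkedness'' lemmas buy, and is the heart of reaching exponent $9$.

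For step~(iii), given a path-of-sets system that is long and wide enough (it suffices that $\ell$ and $w$ be polynomially large, with width the more demanding, of order $\widetilde\Omega(k^2)$), I would assemble the grid as follows: well-linkedness inside each $S_i$ routes $k$ disjoint $A_i$--$B_i$ subpaths, and concatenating these across the connecting linkages gives $k$ horizontal rails threading $S_1,\dots,S_\ell$; then, spending a bounded block of consecutive cells per grid row, one reroutes inside those cells to install the $k$ vertical wires crossing the rails in grid order, each such rerouting costing a further polynomial (morally a squaring) in the parameters. Composing the polynomial losses of (ii) and (iii) yields $t = \Oh(k^9\,\mathrm{polylog}\,k)$. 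The recurring obstacle, and the source of the polylogarithmic factor in the statement, is congestion control: every routing subroutine natively outputs paths of polylogarithmic congestion, and turning these into genuinely disjoint paths --- or absorbing the congestion into $\ell$, $w$, and the grid size without inflating the exponent --- is the delicate bottleneck throughout.
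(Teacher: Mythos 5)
This theorem is quoted in the paper from an external reference and is not proved there, so there is no internal argument to compare against; what can be assessed is whether your sketch matches how the result is actually established. At the level of architecture it does: the three-stage reduction (treewidth to a large well-linked set via tangle/bramble duality, well-linked set to a simultaneously long and wide path-of-sets system, path-of-sets system to a grid minor) is exactly the Chekuri--Chuzhoy framework that Chuzhoy and Tan refine.

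As a proof, however, the proposal has a genuine, large gap, and you correctly place it at step~(ii). You describe the path-of-sets construction only as a greedy ``peel off the next cell, or recurse/re-pair via a balanced sparse separator'' loop, adding that reaching exponent~$9$ amounts to not paying a fresh polynomial factor of well-linkedness loss at each level of the recursion. That remark is precisely what has to be proved and is the entire content of the Chuzhoy--Tan improvement over the earlier polynomial bounds: it rests on a quantitatively tight well-linked-decomposition lemma, a degree-reduction preprocessing step, and careful control of the recursion depth, none of which is stated, let alone proved, in the sketch. Step~(iii) is likewise a theorem of Chekuri and Chuzhoy in its own right (one needs length and width both on the order of $k^2$, together with a nontrivial routing argument inside the cells), and the description of ``horizontal rails plus blocks of cells for vertical wires'' is only its shadow. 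As written the proposal identifies which lemmas would be needed and roughly where they would come from, but supplies none of them, so neither the exponent~$9$ nor even a polynomial bound can be verified from it. A secondary, smaller issue: the claimed $O(\log |X|)$ flow--cut gap for vertex sparsest cut is stated too casually; the constant/polylog factors there feed directly into the final exponent and must be tracked precisely if one hopes to arrive at $9$ rather than something larger.
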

In the mid-90s, Johnson, Robertson, Seymour, and Thomas~\cite{JohnsonRST01}
proposed an analog of treewidth for directed graphs, called \emph{directed treewidth}, 
and conjectured an analogous statement (with the appropriate notion of a directed grid). 
After nearly 20 years, the Directed Grid Theorem was proven in 2015 by Kawarabayashi and Kreutzer~\cite{DBLP:conf/stoc/KawarabayashiK15}.
However, their proof yields a very high dependency between the required directed treewidth bound and the promised size of the directed grid.

While searching for better and better bounds for (undirected) Grid Minor Theorem, researchers
investigated relaxed notions of a grid (e.g. \cite{kawarabayashi_excluded_2014a}.
In some sense, the ``most relaxed'' notion of a grid is a \emph{bramble}: a family $\mathcal{B}$
of connected subgraphs of a given graph such that every two $B_1,B_2 \in \mathcal{B}$ either
share a vertex, or there exists an edge with one endpoint in $B_1$ and one endpoint in $B_2$. 
Brambles can be large; the notion of complexity of a bramble is its \emph{order}: the minimum
size of a vertex set that intersects every element of a bramble. 
We also refer to the \emph{size} of a bramble as the number of its elements and the \emph{congestion}
of a bramble as a maximum number of elements that contain a single vertex; note that the size of
the bramble is bounded by the product of its order and congestion. 

To link brambles and grids, first note that a $k \times k$ grid contains a simple bramble of order $k$ and size $k^2$: the elements
of a bramble are subgraphs consisting of $i$-th row and $j$-th column of the grid for every $1 \leq i,j \leq k$. 
If one wants a bramble of congestion $2$, a bramble of size $k$ whose elements are subgraphs consisting of the $i$-th row and $i$-th column of the grid for every $1 \leq i \leq k$
is of order $\lceil k/2 \rceil$. 
In the other direction, brambles of small congestion can replace grids 
if one wants to use a grid as an object that allows arbitrary interconnections of small congestion between different pairs of vertices on its boundary.
Such a usage appears e.g.\ in arguments for the Disjoint Paths problem~(cf. \cite{ChekuriC13,ChekuriEP18,ChekuriKS05,EdwardsMW17}).

Surprisingly, as proven by Seymour and Thomas~\cite{SeymourT93}, brambles form a dual object
tightly linked to treewidth: the maximum order of a bramble in a graph is \emph{exactly} 
the treewidth of the graph plus one. 
However, as shown by Grohe and Marx~\cite{GroheM09} and sharpened by Hatzel et al.~\cite{HatzelKPS20}, brambles of high order may need to have exponential size:
while a graph of treewidth $k$ neccessarily contains a bramble of order
$\widetilde{\Omega}(\sqrt{k})$ of congestion 2 (and thus of size linear in their order),
there are classes of graphs (e.g., constant-degree expanders) where
for every $0 < \delta < 1/2$ any bramble of order $\widetilde{\Omega}(k^{0.5+\delta})$
requires size exponential in roughly $k^{2\delta}$. 
Here, the notation $\widetilde{\Omega}$ and $\widetilde{\Oh}$ omits polylogarithmic factors.

A slightly more organized bramble of congestion $2$, namely two families of vertex-disjoint
paths with an intersection graph containing a large clique minor
(with size bound of quartic dependence on the treewidth), has been shown to exist in undirected graphs
by Reed and Wood~\cite{DBLP:journals/ejc/ReedW12}. 

In directed graphs, the notion of a bramble naturally generalizes to a family of strongly connected
subgraphs such that every two subgraphs either intersect in a vertex, or the graph contains
an arc with a tail in the first subgraph and a head in the second and
an arc with a tail in the second subgraph and a head in the first.
The order of a directed bramble is defined in the same way as in undirected graphs.
While we no longer have a tight relation between directed treewidth and maximum order
of a directed bramble, these two graph parameters are within a constant factor of each other,
as shown by Reed~\cite{Reed99}. 
However, the lower bound of Grohe and Marx~\cite{GroheM09} also applies to directed graphs:
there are digraph families 
where a graph of directed treewidth $k$ contains only brambles of order $k^{0.5+\delta}$
of exponential size, for any $0 < \delta < 0.5$. 

Hence, it is natural to ask what order of a bramble of constant congestion we can expect
in a directed graph of directed treewidth $t$. The lower bound of Grohe and Marx shows that
we cannot hope for a better answer than $\widetilde{\Oh}(\sqrt{t})$. 
Since a directed grid contains a bramble of congestion $2$ and order linear in the size
of the grid, the Directed Grid Theorem implies that for every $k \geq 1$ there exists $t=t(k)$
such that directed treewidth at least $t$ guarantees an existence of a bramble of order $k$
and congestion $2$. However, the function $t=t(k)$ stemming from the proof
of Kawarabayashi and Kreutzer~\cite{DBLP:conf/stoc/KawarabayashiK15} is very fast-growing.
Similarly, a half-integral variant of the Directed Grid Theorem~\cite{kawarabayashi_excluded_2014a} could be used to obtain a bramble of order~$k$ and congestion~$4$ but also there the function $t=t(k)$ is very fast-growing.

In this work, we show that this dependency can be made polynomial, 
   if we are satisfied with slightly larger congestion.

\begin{theorem}\label{thm:main}
For every $k \geq 1$ there exists $t = \Oh(k^{48} \log^{13} k)$
such that every directed graph of directed treewidth at least $t$
contains a bramble of congestion at most $8$ and size at least $k$.
\end{theorem}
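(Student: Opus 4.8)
The plan is to follow the standard pipeline for proving structural results about directed treewidth: first extract from the directed treewidth bound a concrete combinatorial witness, then manipulate that witness into a bramble of bounded congestion. Concretely, if a digraph $D$ has large directed treewidth, then by the results relating directed treewidth to its combinatorial duals (Reed~\cite{Reed99}, and the haven/bramble dualities of Johnson, Robertson, Seymour, and Thomas~\cite{JohnsonRST01}) we obtain either a bramble of large order (possibly of huge size — this is exactly the obstruction pointed out by the Grohe--Marx lower bound) or, more usefully, a large \emph{well-linked set}: a set $A \subseteq V(D)$ such that for every partition $A = X \cup Y$ with $|X| \le |Y|$ there are $|X|$ vertex-disjoint $X$-$Y$ paths in $D$ and also $|X|$ vertex-disjoint $Y$-$X$ paths. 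So the first step is to show that directed treewidth $t$ forces a well-linked (or fractionally well-linked) set of size polynomial in the target parameter, say of size $\mathrm{poly}(k) \cdot \mathrm{polylog}$ — the exponents here are where the $k^{48}\log^{13}k$ will ultimately come from.

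The second and main step is to convert a large well-linked set $A$ into a constant-congestion bramble of size $\ge k$. The idea is to build many pairwise ``linked'' strongly connected subgraphs using routings guaranteed by well-linkedness. One natural route: iteratively pick small subsets of $A$, find short-ish systems of disjoint paths connecting them in both directions (forming strongly connected ``blobs''), and argue that one can keep the total congestion on any vertex bounded by a universal constant. To control congestion one would want to use an approximate/LP-based routing (fractional well-linkedness) together with a rounding/grouping argument, or a recursive ``cutting into well-linked pieces'' scheme in the spirit of Chekuri--Ens--Priyadarshi~\cite{ChekuriEP18} and Chuzhoy--Tan~\cite{ChuzhoyT21}: repeatedly split a well-linked set into two roughly balanced well-linked halves connected by few paths, obtaining a (directed analogue of a) tree-of-well-linked-sets, and read off the bramble elements from the nodes of this decomposition tree. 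The bound of $8$ on the congestion strongly suggests each bramble element is assembled from a constant number of path systems (e.g.\ two ``forward'' routings and two ``backward'' routings, possibly doubled for the strong-connectivity closure), and that the partition of $A$ into the seeds of the bramble elements is chosen so that each seed is used by only $O(1)$ routings.

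The third step is bookkeeping: verify that the constructed family $\mathcal B$ is genuinely a directed bramble — each element is strongly connected (this is where ``forward'' and ``backward'' routings between the same pair of seeds must be glued together), and for any two elements $B_1, B_2$ either they intersect or well-linkedness provides an arc (in fact a path, which can be absorbed into one of the blobs or used directly as the required arc after contracting) from $B_1$ to $B_2$ and from $B_2$ to $B_1$. Finally, track the polynomial blow-up through each reduction to land at $t = \Oh(k^{48}\log^{13}k)$: the large exponent presumably accumulates multiplicatively from (i) the directed-treewidth-to-well-linked-set step, (ii) a polynomial loss in an approximate directed multicommodity-flow / sparsest-cut routing used to keep congestion constant, and (iii) the recursive halving which contributes the logarithmic factors.

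The hard part, I expect, is keeping the congestion bounded by an absolute constant while still getting $\Omega(k)$ bramble elements. Guaranteeing pairwise linkage is ``easy'' if one is allowed congestion growing with $k$ (just route greedily), and guaranteeing large size is easy if one allows large order; the tension is that constant congestion forces the bramble elements to be ``thin'', so the disjointness/routing budget must be spent very carefully — this is precisely the difficulty that the Grohe--Marx lower bound shows is unavoidable beyond order $\widetilde\Omega(\sqrt t)$, and matching it up to polynomial factors is the technical core. I would anticipate that the right tool is a directed, congestion-controlled routing lemma (an approximate directed flow/cut theorem applied to the well-linked set), combined with a clean recursive decomposition that isolates $O(1)$-congestion per level and only $O(\log)$ levels.
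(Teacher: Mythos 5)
Your high-level starting point (directed treewidth $\Rightarrow$ a large well-linked structure) is in the right spirit, and you correctly identify the central tension: keeping congestion constant while producing $\Omega(k)$ mutually touching strongly connected pieces. But the body of the argument --- the part that would actually resolve that tension --- is missing, and the tools you gesture at are not the ones that make the proof go through.

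Concretely, the gap is in your ``second and main step.'' You propose to control congestion via an LP-based approximate multicommodity-flow/sparsest-cut routing of the well-linked set, or via a recursive ``cut into two well-linked halves'' decomposition in the style of Chuzhoy--Tan. Neither is developed, and both are problematic in the directed setting: directed flow/cut gaps are polynomial rather than polylogarithmic, and a recursive balanced-separator scheme has no known directed analogue that preserves constant congestion through $O(\log)$ levels. You explicitly flag this step as ``the hard part,'' but you leave it as an anticipation rather than a proof. What the paper actually does is quite different and does not go through a flow relaxation or a recursive decomposition tree at all. It extracts an $(a,b)$-path system (vertex-disjoint paths $P_1,\dots,P_a$ with well-linked terminal sets $A_i,B_i$), builds a linkage $\LL_{i,j}$ from $B_i$ to $A_j$ for each ordered pair, and then plays a \emph{density dichotomy} on intersection graphs of linkages: if enough pairs of linkages $\LL_{i,j},\LL_{i',j'}$ have a sparse (low-degeneracy) intersection graph, a Lov\'asz Local Lemma argument (Reed--Wood) selects one representative path per pair, pairwise disjoint, from which closed walks $W_e$ through the $P_i$'s are assembled and a Kostochka--Thomason clique minor in an auxiliary graph yields the bramble; if instead some intersection graph of (closed walks derived from) linkages is dense, Kostochka--Thomason applied directly to that intersection graph yields a clique minor whose branch sets are the bramble. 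The congestion bound $8$ comes from careful bookkeeping of overlaps of threaded linkages (overlap $\le 3$), the Bowtie/Partitioning lemma (adding $+1$ per untangled side), and the fact that each vertex lies in at most two branch sets of the clique minor; it is not ``two forward and two backward routings doubled.'' The three-way case split (Cases~1--3, driven by two matchings $M_1,M_2$ and a set $Z$ on the pair set $V$) is what guarantees that at least one of the sparse- or dense-type arguments applies to a $\ge 0.6$ fraction of pairs --- this combinatorial threshold is essential for the sparse winning scenario and has no counterpart in your sketch.

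So the proposal is not a wrong theorem statement, but it is not a proof either: the congestion-control mechanism you would need is exactly the part you leave open, and the mechanism the paper uses (degeneracy dichotomy $+$ Kostochka--Thomason $+$ LLL on path systems/threaded linkages) is substantively different from the flow-based or recursive-decomposition approach you outline.
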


So far, similar bounds were known only for planar graphs, where Hatzel, Kawarabayashi,
  and Kreutzer showed a polynomial bound (with degree $6$ of the polynomial) for the Directed Grid Theorem~\cite{DBLP:conf/soda/HatzelKK19}. Decreasing the congestion in~\cref{thm:main}, ideally to $2$, even at the cost of higher polynomial dependency of $t$ and $k$, remains an interesting open problem.
  Optimizing the parameters 
  in the other direction would also be interesting: for all we know, obtaining the
  dependency $t = \widetilde{\Oh}(k^2)$ for constant congestion may be possible. 

On the technical level, the proof of~\cref{thm:main} borrows a number of tools from 
previous works. From Reed and Wood~\cite{DBLP:journals/ejc/ReedW12}, we borrow the idea
of using Kostochka-Thomason degeneracy bounds for graphs excluding a minor~\cite{kostochka_lower_1984,thomason_1984} to ensure the existence of a large clique minor in an intersection graph of a family of strongly connected subgraphs, if it turns out to be dense (which immediately gives a desired bramble). We also use their Lov\'{a}sz Local Lemma-based argument to find a large independent set in a multipartite graph of low degeneracy. 
Similarly as in the proof of Directed Grid Theorem~\cite{DBLP:conf/stoc/KawarabayashiK15}
and in its planar variant~\cite{DBLP:conf/soda/HatzelKK19},
we start from the notion of a path system and its existence (with appropriate parameters) in graphs
of high directed treewidth. 
Finally, from our recent proof of half- and quarter-integral directed Erd\H{o}s-P\'{o}sa property~\cite{DBLP:journals/corr/abs-1907-02494,DBLP:conf/esa/MasarikMPRS19}, 
we reuse their partitioning lemma, allowing us to find a large number of closed walks with small congestion.
On top of the above, compared to~\cite{DBLP:conf/soda/HatzelKK19} and~\cite{DBLP:journals/corr/abs-1907-02494,DBLP:conf/esa/MasarikMPRS19}, the proof of~\cref{thm:main} offers a much more elaborate analysis of the studied path system, allowing us to find the desired bramble. 

\paragraph{Organization.}
We collect the formal statements of results from previous work in \cref{sec:prelim}.
In \cref{sec:tools} we gather tools that show how to obtain a low congestion bramble in various special situations and we show how to obtain some intermediate structures.
In \cref{sec:main} we then show how to combine all the tools to prove \cref{thm:main}.
  

\section{Preliminaries}\label{sec:prelim}
For integers $n \in \mathbb{N}$ we use $[n]$ to denote $\{1, 2, \ldots, n\}$.
\paragraph{Basics.}
Let $G$ be a directed graph.
A \emph{walk} in $G$ is a sequence~$W$ of vertices such that for each pair of consecutive vertices~$u, v$ in $W$ there is an arc $(u, v)$ in $G$.
A walk is \emph{closed} if it starts and finishes with the same vertex.
Let $W$ be a walk.
We denote by $V(W)$ the set of vertices that occur in the sequence~$W$.
A \emph{subwalk} of $W$ is a segment of~$W$, that is, a subsequence of consecutive elements.
The number of \emph{occurrences} of a vertex~$v$ in $W$, denoted by $\oc(v,W)$, is the number of times it occurs in the sequence~$W$; if $W$ is closed and $v$ is its starting vertex, then it is the number of times $v$ occurs in the sequence~$W$ minus one.
The \emph{length} of a walk~$W$ is the sum of the numbers of occurrences of the vertices in~$V(W)$.

\begin{definition}[Congestion, Overlap]
Let $\Ww$ be a family of walks in $G$
and $\Ss$ be a family of subsets of $V(G)$. We define:
\begin{align*}
\text{overlap}(\Ww)\df & \max_{v\in V(G)}{\sum_{W\in \Ww} \oc(v,W), }\\
\text{congestion}(\Ss)\df & \max_{v\in V(G)}{|\{S\in\Ss \mid v\in S\}|}.
\end{align*}
For a set of walks $\Ww$, its congestion is the congestion of the family $\{ V(W) \mid W \in \Ww\}$.
\end{definition}

\paragraph{Linkages, path systems, minors.}

For $A,B \subseteq V(G)$, such that $|A|=|B|$, a \emph{linkage} from $A$ to $B$ in~$G$ is a set of~$|A|$ pairwise vertex-disjoint paths in~$G$, each with a starting vertex in $A$ and ending vertex in $B$.
A set $X \subseteq V(G)$ is \emph{well-linked} if for every $A,B \subseteq X$, s.t. $|A| = |B|$ there are $|A|$ vertex-disjoint $A$-$B$-paths in $G - (X \setminus (A \cup B))$.

\begin{definition}[Path system]\label{def:path-system}
Let $a, b \in \mathbb{N}$. An \emph{$(a,b)$-path system} $(P_i,A_i,B_i)_{i=1}^a$ consists of
\begin{itemize}
\item vertex-disjoint paths $P_1,P_2,\ldots,P_a$, and
\item for every $i \in [a]$, two sets $A_i,B_i \subseteq V(P_i)$, each of size 
$b$, such that every vertex of $B_i$ appears on $P_i$ later than all vertices of $A_i$,
\end{itemize}
such that $\bigcup_{i=1}^a A_i \cup B_i$ is well-linked in $G$.
\end{definition}

In this work, we do not need the exact (and involved) definition of directed treewidth; instead, we immediately jump to path systems via the following lemma. 

\begin{lemma}[{Kawarabayashi, Kreutzer~\cite{DBLP:conf/stoc/KawarabayashiK15,DBLP:journals/corr/KawarabayashiK14} (implicit), see also \cite[Lemma~7]{DBLP:journals/corr/abs-1907-02494}}]\label{lem:KK}
There exists a constant $c_{\textsf{KK}}$ such that for every two integers $a,b \geq 1$
every directed graph $G$ of directed treewidth at least $c_{\textsf{KK}} \cdot a^2 b^2$ contains
an $(a,b)$-path system.
\end{lemma}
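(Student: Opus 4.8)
The plan is to route the proof through two classical intermediate notions. First I would upgrade the lower bound on directed treewidth to the existence of a large \emph{well-linked set}; then I would carve an $(a,b)$-path system out of a single linkage realizing that well-linked set, using the linkage-rerouting (``surgery'') technique of Kawarabayashi and Kreutzer~\cite{DBLP:conf/stoc/KawarabayashiK15,DBLP:journals/corr/KawarabayashiK14}. Essentially all of the work, and the entire quadratic gap between $t$ and $ab$, lives in the second step.

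For the first step, I would invoke that directed treewidth is, up to a universal multiplicative constant, equivalent to the order of a largest haven (Johnson, Robertson, Seymour and Thomas~\cite{JohnsonRST01}, Reed~\cite{Reed99}). Unfolding the definition of an arboreal decomposition, a digraph with no decomposition of width below $t$ admits no small separator of the relevant kind, and a routine averaging argument converts this into a well-linked set $W$ with $|W| = \Omega(t)$. Choosing $c_{\textsf{KK}}$ large enough, directed treewidth at least $c_{\textsf{KK}} a^2 b^2$ thus yields a well-linked $W$ with, say, $|W| \ge 16 a^2 b^2$. I will use freely that every subset of a well-linked set is again well-linked, as deleting fewer vertices only makes the prescribed linkages easier to find; in particular, the terminal set $\bigcup_{i=1}^a (A_i \cup B_i)$, which I will take to be a subset of $W$, is then automatically well-linked in $G$.

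For the second step, split $W$ into two equal halves $X$ and $Y$ and apply well-linkedness with $X$, $Y$ as the two prescribed sets (no vertex needs deleting) to obtain a linkage $\mathcal{L} = \{L_1, \dots, L_N\}$, $N = |W|/2$, of pairwise vertex-disjoint paths with $L_j$ running from some $x_j \in X$ to some $y_j \in Y$. Each path $P_i$ of the path system will be a concatenation of roughly $2b$ of the $L_j$'s, glued together by connector paths whose endpoints are again among the terminals $x_j, y_j \in W$ and hence once more supplied by well-linkedness; the endpoints of the pieces forming $P_i$ provide the $2b$ vertices needed for $A_i \cup B_i$, and the requirement that all of $A_i$ precede all of $B_i$ is met for free by listing these marked terminals in the order in which they occur along $P_i$. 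The obstruction is that a connector need not avoid the interiors of the $L_j$'s nor of the other connectors; I would handle this by the standard surgery, processing connectors in a fixed order and, the first time a connector meets an already-committed path, splicing it onto that path. One then has to verify that the resulting $P_1, \dots, P_a$ are genuine vertex-disjoint directed paths and that each still threads at least $2b$ marked terminals in a consistent order after the splicing --- which can be guaranteed only if the groups of pieces forming the $P_i$'s, together with their connectors, are chosen along a suitable \emph{monotone} interleaving pattern, extracted by an Erd\H{o}s--Szekeres-type pigeonhole on how the paths of $\mathcal{L}$ cross one another. It is exactly this pigeonhole that inflates the requirement on $N$, and hence on the directed treewidth, from $\Theta(ab)$ to $\Theta(a^2 b^2)$.

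The hard part is therefore the rerouting and bookkeeping at the core of the second step: arguing that the splicing destroys only lower-order many marked terminals per highway, while keeping the number of highways equal to $a$ and their lengths around $2b$, so that the final demand is exactly $\Theta(a^2 b^2)$. This is the technical heart of the Kawarabayashi--Kreutzer argument~\cite{DBLP:conf/stoc/KawarabayashiK15,DBLP:journals/corr/KawarabayashiK14}, spelled out in~\cite[Lemma~7]{DBLP:journals/corr/abs-1907-02494}; rather than reproduce that delicate surgery here, I would rely on those works, as the statement does.
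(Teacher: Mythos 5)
This lemma is imported by the paper as a black box: the paper contains no proof of it (it explicitly says it jumps straight to path systems via this statement), so there is no internal argument to compare yours against. Your two-step outline --- large directed treewidth yields a large well-linked set via the haven/bramble duality of Johnson--Robertson--Seymour--Thomas and Reed, and a well-linked set of size $\Theta(a^2b^2)$ is then converted into an $(a,b)$-path system by chaining pieces of a linkage between two halves of that set --- is the strategy of the cited sources, and your side remark that subsets of well-linked sets remain well-linked (so that $\bigcup_{i=1}^a A_i \cup B_i$ inherits well-linkedness from $W$) is correct under the paper's definition, since deleting fewer vertices only helps. The caveat is that your proposal openly defers the entire technical content of the second step --- the splicing surgery and the pigeonhole responsible for the quadratic loss --- to Kawarabayashi--Kreutzer~\cite{DBLP:conf/stoc/KawarabayashiK15} and to \cite[Lemma~7]{DBLP:journals/corr/abs-1907-02494}. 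That makes it a provenance argument rather than a self-contained proof, which is exactly how the paper treats the lemma and is therefore acceptable here; but note that the specific mechanism you describe for that step (an Erd\H{o}s--Szekeres-type monotone interleaving of linkage pieces) is your reconstruction of the cited argument, not something verified against it, so it should not be presented as more than a heuristic account of where the $a^2b^2$ comes from.
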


The \emph{average degree} of a graph with $n$ vertices and $m$ edges is $2m/n$.
We say that a graph is \emph{$d$-degenerate} if every subgraph of $G$ contains a vertex of degree at most $d$.
The \emph{degeneracy} of a graph~$G$ is minimum $d$ such that $G$ is $d$-degenerate. 
Observe that, for every graph~$G$, we have $\Delta_a(G) \leq 2d(G)$, where $\Delta_a(G)$ is $G$'s average degree and $d(G)$ is $G$'s degeneracy.

\begin{theorem}[{Kostochka~\cite[Theorem~1]{kostochka_lower_1984}, Thomason~\cite[Theorem]{thomason_1984} (restated)}]\label{thm:kostochka}
There exists a constant $c_{\textsf{KT}}$, such that for every $a \geq 2$,
every undirected graph $G$ with degeneracy at least $c_{\textsf{KT}} \cdot a \cdot \sqrt{\log a}$ 
contains $K_a$ as a minor.
\end{theorem}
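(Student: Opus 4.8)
The plan is to reduce this restatement to the original, standard formulations of the Kostochka--Thomason theorem, which are phrased in terms of \emph{average degree} (equivalently, edge density) rather than degeneracy: there is a constant $c$ such that every graph with average degree at least $c\cdot a\sqrt{\log a}$ contains $K_a$ as a minor~\cite{kostochka_lower_1984,thomason_1984}. Granting this, it suffices to show that every graph $G$ of degeneracy at least $c\cdot a\sqrt{\log a}$ contains a subgraph of average degree at least $c\cdot a\sqrt{\log a}$: a minor of a subgraph of $G$ is a minor of $G$, so applying the cited bound to that subgraph yields $K_a$ as a minor of $G$, and one may then take $c_{\textsf{KT}} = c$.

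For the translation step I would invoke the standard characterisation of degeneracy: $G$ has degeneracy at least $d$ exactly when $G$ is not $(d-1)$-degenerate, i.e.\ exactly when some subgraph $H \subseteq G$ has minimum degree at least $d$. Operationally, iteratively delete any vertex of degree strictly below $d$; if the degeneracy of $G$ is at least $d$ this procedure cannot remove all of $V(G)$, and the nonempty graph left over has minimum degree at least $d$. For such an $H$, the average degree satisfies $\Delta_a(H) = \frac{1}{|V(H)|}\sum_{v\in V(H)}\deg_H(v) \geq \delta(H) \geq d$, where $\delta(H)$ denotes the minimum degree of $H$. Taking $d = c\cdot a\sqrt{\log a}$ produces precisely the dense subgraph required above. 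All of this is elementary, so there is no real obstacle in this direction.

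The actual depth is concentrated in the imported Kostochka--Thomason bound, which we use as a black box and do not reprove. For orientation only: that proof passes to a graph that is edge-minimal subject to having the desired density, argues that such a graph is a good expander, and then builds the $K_a$ minor inside it by a random-contraction argument; the $\sqrt{\log a}$ factor cannot be removed, as suitably dense random graphs witness. None of this machinery is needed here --- the restatement follows from nothing more than the degeneracy-versus-average-degree comparison above.
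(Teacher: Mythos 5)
The paper does not supply its own proof of this statement: it is an imported result, cited as a ``restated'' version of Kostochka's and Thomason's theorems, which in the original sources are phrased in terms of average degree (equivalently, edge density) rather than degeneracy. Your proposal correctly identifies and carries out the implicit translation: if $G$ has degeneracy at least $d$ then $G$ is not $(d-1)$-degenerate, so by the paper's definition of degeneracy some subgraph $H \subseteq G$ has no vertex of degree at most $d-1$, i.e.\ $\delta(H) \ge d$, hence average degree at least $d$; applying the original density version to $H$ gives a $K_a$ minor of $H$, and minors of subgraphs are minors. This is exactly what ``restated'' is standing in for, and your argument is correct and complete. (The paper itself only records the reverse-flavored inequality $\Delta_a(G) \le 2d(G)$, which is not the direction needed here; the relation you actually use --- that large degeneracy forces a subgraph of large minimum degree --- is the right one.)
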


\begin{lemma}[Reed and Wood~{\cite[Lemma~4.3]{DBLP:journals/ejc/ReedW12}}]\label{lem:LLL}
Let $r$ be an integer with $r \ge 2$, $d$ be a positive real, and $H$ be an $r$-colored graph with color classes $V_1,\ldots,V_r$, such that for every $i \in [r]$ it holds that $|V_i|\ge 4e(r-1)d$  and for every $i \neq j$ the graph $H[V_i\cup V_j]$ is $d$-degenerate. Then there exists an independent set $\{x_1,\ldots,x_r\}$ such that $x_i\in V_i$ for every $i \in [r]$.
\end{lemma}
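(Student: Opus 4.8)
The plan is to apply the symmetric Lov\'{a}sz Local Lemma to the obvious randomized construction. Pick, independently for each $i \in [r]$, a vertex $x_i \in V_i$ uniformly at random; since the color classes are pairwise disjoint the chosen vertices are automatically distinct, so $\{x_1,\ldots,x_r\}$ fails to be independent only if some edge of $H$ joins two of them. For every unordered pair $\{i,j\}$ with $i \neq j$, I would introduce the bad event $C_{ij}$ that $H$ has an edge between $x_i$ and $x_j$. If none of the events $C_{ij}$ occurs, then $\{x_1,\ldots,x_r\}$ is the desired independent transversal, so it suffices to show $\Pr\bigl[\bigcap_{i\neq j}\overline{C_{ij}}\bigr] > 0$.

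First I would bound the probabilities. A $d$-degenerate graph on $n$ vertices has at most $dn$ edges, so $H[V_i\cup V_j]$ has at most $d(|V_i|+|V_j|)$ edges, in particular at most that many edges between $V_i$ and $V_j$; each fixed such edge is realized with probability $1/(|V_i|\,|V_j|)$, so by the union bound
\[
 \Pr[C_{ij}] \;\le\; \frac{d(|V_i|+|V_j|)}{|V_i|\,|V_j|} \;=\; \frac{d}{|V_i|}+\frac{d}{|V_j|} \;\le\; \frac{1}{2e(r-1)} \;=:\; p ,
\]
using the hypothesis $|V_i|,|V_j|\ge 4e(r-1)d$. For the dependency structure, $C_{ij}$ is determined by the two independent random variables $x_i,x_j$, hence is mutually independent of every $C_{kl}$ with $\{k,l\}\cap\{i,j\}=\emptyset$; the number of remaining pairs $\{k,l\}\neq\{i,j\}$ meeting $\{i,j\}$ is at most $2(r-2)$, so we may use dependency degree $D=2(r-2)$. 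Then $e\,p\,(D+1)=\frac{2r-3}{2r-2}<1$ for every $r\ge 2$, so the symmetric Lov\'{a}sz Local Lemma yields $\Pr[\bigcap_{i\neq j}\overline{C_{ij}}]>0$, and the proof is complete.

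I do not expect a genuine obstacle; the only point needing care is the choice of bad events. Indexing them by individual edges of $H$ fails, because an edge-event can be dependent on as many as $\Theta(\Delta(H)\cdot|V_i|)$ others --- far too many for the symmetric LLL. Bundling all edges between a fixed pair of classes into a single event $C_{ij}$ fixes this: the dependency degree drops to $O(r)$, while the degeneracy edge-count bound keeps $\Pr[C_{ij}]$ at $O(1/r)$, and the two linear-in-$r$ factors cancel in the condition $e\,p\,(D+1)\le 1$ precisely thanks to the factor $4e$ built into the size lower bound on the classes.
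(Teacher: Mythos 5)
Your proof is correct, and the paper itself does not prove this lemma---it imports it verbatim from Reed and Wood, whose original argument is the same randomized construction with the symmetric Lov\'asz Local Lemma, except that they index the bad events by individual edges after first truncating every color class to exactly $\lceil 4e(r-1)d\rceil$ vertices. Your regrouping of the edges into one event $C_{ij}$ per pair of classes is a cosmetic variation that lets you skip the truncation step; all the quantitative checks ($\Pr[C_{ij}]\le \tfrac{1}{2e(r-1)}$ via the $dn$ edge bound for $d$-degenerate graphs, dependency degree $2(r-2)$, and $ep(D+1)<1$) are accurate.
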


For a family $\Ss$ of sets, its \emph{intersection graph}, denoted by $\IntGraph{\Ss}$, has vertex set $\Ss$, and two distinct sets $S_1,S_2 \in \Ss$ are adjacent in $\IntGraph{\Ss}$ if $S_1 \cap S_2 \neq \emptyset$.
If $\mathcal{W}$ is a set of walks, then the \emph{intersection graph} $\IntGraph{\mathcal{W}}$ is defined as $\IntGraph{\{V(W) \mid W \in \mathcal{W}\}}$.

\section{Tools}\label{sec:tools}
We now gather the new tools that we need in the main proof.
The general setting is that, if the directed treewidth of our graph is large enough, then there is a path system (\cref{def:path-system}) containing a large number of sets $A_i, B_i$ and large linkages between them.
We then distinguish several cases for sets of pairs of linkages and the densities of the intersection graphs of the paths in these linkages.
We end up with three fundamental scenarios, that each allow us to define a desired bramble.
How the brambles are obtained in these scenarios is shown in \cref{sec:brambles}.

In \cref{sec:threaded} we derive a set of tools that allow us to partition paths in the sets of linkages mentioned above in such a way as to keep both the congestion low and the intersection graphs of the parts sufficiently dense.

\subsection{Extracting a bramble}\label{sec:brambles}
\begin{lemma}[Dense winning scenario]\label{lem:dense}
Let $c_{\textsf{KT}}$ be the constant from~\cref{thm:kostochka}.
If a graph $G$ contains a family $\Ww$  of closed walks of congestion $\alpha$,
whose intersection graph is not $c_{\textsf{KT}} \cdot d \cdot \sqrt{\log d}$-degenerate, then
$G$ contains a bramble of congestion $\alpha$ and size $d$.
\end{lemma}
\begin{proof}
Since $\IntGraph{\Ww}$ is not $c_{\textsf{KT}} \cdot d \cdot \sqrt{\log d}$-degenerate, by~\cref{thm:kostochka}, it contains a $K_d$ minor $\Kk$.
Since each branch set of $\Kk$ induces a connected subgraph of $\IntGraph{\Ww}$, and each vertex of $\IntGraph{\Ww}$ is a closed walk in $G$, we obtain that each branch set of $\Kk$ corresponds to a strongly connected subgraph of $G$.
Furthermore, since between any branch sets of $\Kk$ there is an edge in $\IntGraph{\Ww}$, we conclude that the subsets of $V(G)$ corresponding to branch sets of $\Kk$ form a bramble in $G$.
The congestion bound follows clearly from the fact that the congestion of $\Ww$ is~$\alpha$.
\end{proof}

\begin{lemma}[Sparse winning scenario]\label{lem:sparse1}
There is an absolute constant $c$ with the following property.
Let $a > 1$, $b \geq 1$, and let $(P_i,A_i,B_i)_{i=1}^a$ be an $(a,b)$-path system in $G$.
Let $\mathcal{I}$ be a subset of $[a] \times [a] \setminus \{(i,i) \mid i \in [a]\}$, such that $|\mathcal{I}| \geq 0.6 \cdot a(a-1)$.
Assume that for every $(i,j) \in \mathcal{I}$ we have a path $P_{i,j}$ from $B_i$ to $A_j$
such that $\{P_{i,j} \mid (i,j) \in \mathcal{I}\}$ is of congestion at most $\alpha$.
Then $G$ contains a bramble of congestion at most $2+2\alpha$ and size at least  $c \cdot  \left(\frac{a^{1/2}}{\log^{1/4} a}\right)$.
\end{lemma}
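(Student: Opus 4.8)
The plan is to recast the hypothesis as a statement about an auxiliary graph that records the ``good'' pairs, invoke the Kostochka--Thomason bound to find a large clique minor in it, and transfer that clique minor into $G$ as a family of pairwise-touching strongly connected subgraphs of small overlap. First, symmetrise $\mathcal I$: let $H$ be the undirected graph on vertex set $[a]$ in which $\{i,j\}$ is an edge exactly when both $(i,j)$ and $(j,i)$ lie in $\mathcal I$, so that such an edge comes equipped with \emph{both} paths $P_{i,j}$ and $P_{j,i}$. Since $[a]\times[a]$ minus the diagonal has $a(a-1)$ elements, of which at most $0.4\,a(a-1)$ miss $\mathcal I$, and each missing ordered pair destroys at most one edge of $H$, we get $|E(H)|\ge 0.1\,a(a-1)$. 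Hence $H$ has a subgraph of minimum degree, and so degeneracy, $\Omega(a)$, and by \cref{thm:kostochka} $H$ contains $K_m$ as a minor for some $m=\Omega\!\left(a/\sqrt{\log a}\right)$; fix branch sets $W_1,\dots,W_m$, which induce connected subgraphs of $H$ and are pairwise adjacent.

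The combinatorial core is to turn this clique minor into $r=\Theta(\sqrt m)=\Theta\!\left(a^{1/2}/\log^{1/4}a\right)$ simple cycles $Z_1,\dots,Z_r$ in $H$ that pairwise share a vertex, while every vertex --- and hence every edge --- of $H$ lies on at most two of them. I would reserve one private branch set $W^{\{p,q\}}$ for each unordered pair $\{p,q\}\subseteq[r]$ (choosing $r$ so that $\binom r2$, together with a near-quadratic further supply of private ``connector'' branch sets, stays below $m$), pick a vertex $z^{\{p,q\}}\in W^{\{p,q\}}$, and build $Z_p$ to pass cyclically through the vertices $z^{\{p,q\}}$ with $q\neq p$, routing the stretch between consecutive ones through private branch sets and using that all branch sets are pairwise adjacent. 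Then $Z_p$ and $Z_q$ meet at $z^{\{p,q\}}$, and because a private branch set is entered only by the (at most two) cycles owning it, the overlap bounds hold.

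To finish, transfer each cycle to $G$. Write $Z_p=i_1i_2\cdots i_si_1$; every consecutive pair is an edge of $H$, so, fixing a cyclic direction, we have paths $P_{i_1,i_2},P_{i_2,i_3},\dots,P_{i_s,i_1}$. Let $D_p$ traverse $P_{i_1}$ from the last vertex of $A_{i_1}$ to the start (in $B_{i_1}$) of $P_{i_1,i_2}$, then $P_{i_1,i_2}$, then $P_{i_2}$ from the end of $P_{i_1,i_2}$ to the start of $P_{i_2,i_3}$, and so on, finally returning along $P_{i_s,i_1}$ to a vertex of $A_{i_1}$ and following $P_{i_1}$ forward back to its last vertex of $A_{i_1}$ --- legitimate since in a path system every vertex of $B_{i_1}$ succeeds every vertex of $A_{i_1}$. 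Each $D_p$ is a closed walk, hence strongly connected, and it contains the last vertex of $A_i$ for every $i$ on $Z_p$; thus if $i\in Z_p\cap Z_q$ that vertex witnesses the required interaction, so $\{D_1,\dots,D_r\}$ is a bramble. For congestion: a vertex on some $P_i$ lies on at most two of the walks and is crossed at most once by each, contributing at most $2$; a vertex lying on at most $\alpha$ of the paths $P_{i,j}$ contributes at most one per such path per walk using it, and each $P_{i,j}$ is used by at most two walks, contributing at most $2\alpha$. So the congestion is at most $2+2\alpha$ and the size is $r=\Omega\!\left(a^{1/2}/\log^{1/4}a\right)$, as required (for the few small $a$ not covered, choose $c$ so small that the bound is below $1$).

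The step I expect to be the main obstacle is the middle one: a clique minor only guarantees \emph{some} edge between two branch sets, not an edge at a prescribed vertex, so forcing $Z_p$ and $Z_q$ through the \emph{same} vertex of $W^{\{p,q\}}$ --- while keeping each $Z_p$ simple and each vertex of $H$ on at most two cycles --- requires a careful simultaneous choice of the private vertices $z^{\{p,q\}}$, the connector branch sets, and the entry and exit points inside each branch set. This is exactly where a more elaborate analysis of the path system comes into play.
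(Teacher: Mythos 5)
Your opening (the symmetrized auxiliary graph $H$, the count $|E(H)|\ge 0.1\,a(a-1)$, and the invocation of \cref{thm:kostochka} to obtain a clique minor of size $\Omega(a/\sqrt{\log a})$) and your closing (transferring an $H$-structure to a closed walk in $G$ using $P_{i,j}$, $P_{j,i}$ and the in-between segments of $P_i$, $P_j$, then arguing congestion via ``at most two structures own each edge/vertex of $H$'') match the paper's argument. However, the middle step --- which you yourself flag as the main obstacle --- is genuinely incomplete and is where the paper does something cleaner and quite different. Forcing $r$ \emph{simple} cycles $Z_p$ through prescribed vertices $z^{\{p,q\}}$ is overconstrained: after entering a branch set $W^{\{p,q\}}$, the walk from the entry vertex through $z^{\{p,q\}}$ to the exit vertex generally is not simple (if the branch set is, say, a tree, the two legs can overlap), and you give no construction that simultaneously keeps each cycle simple, routes through the designated shared vertex, and respects the ``at most two cycles per vertex of $H$'' constraint. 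Your congestion bound of $2+2\alpha$ explicitly relies on each $D_p$ crossing each $P_i$ and $P_{i,j}$ at most once, so this is not just cosmetic.

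The paper avoids the whole difficulty by dropping the requirement that the $H$-side objects be (simple) cycles. It assumes the clique minor has size $p=\binom{q}{2}$ with branch sets indexed $B_{x,y}$, $\{x,y\}\in\binom{[q]}{2}$, and observes that for each $x$ the set $\bigcup_{y\ne x}B_{x,y}$ is connected; it takes a spanning tree $T_x$ of it. Then $T_x$ and $T_y$ automatically share the entire branch set $B_{x,y}$ (no need to aim at a common vertex), and each vertex and each edge of $H$ lies in at most two of the trees. The bramble element is not a single closed walk but the \emph{subgraph} $G_x=\bigcup_{e\in E(T_x)}W_e$, where $W_e$ is the closed walk through $P_{i,j}, P_j, P_{j,i}, P_i$; strong connectivity of $G_x$ follows because every $W_e$ for $e\in E(T_x)$ is a closed walk and consecutive ones overlap (via the ``marker'' edges $e_i$ on $P_i$). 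This sidesteps all of the routing, simplicity, and connector-budget issues in your plan while yielding the same size $q=\Theta(a^{1/2}/\log^{1/4}a)$ and the same congestion $2+2\alpha$. If you want to salvage your approach, the practical fix is precisely this relaxation: replace simple cycles by trees (or allow non-simple closed walks with bounded multiplicity) and take unions of the $W_e$'s as subgraphs rather than concatenating them into a single walk.
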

\begin{proof}
Consider a graph $H$ with vertex set $[a]$ and $ij \in E(H)$ if both $(i,j) \in \mathcal{I}$ and $(j,i) \in \mathcal{I}$.
Since $|\mathcal{I}|\geq 0.6 \cdot a(a-1)$, we have $|E(H)| \geq 0.1 \cdot \binom{a}{2}$.
By~\cref{thm:kostochka}, $H$ contains a clique minor of size $p \geq c' \cdot a / \sqrt{\log a}$, where $c'$ is an absolute constant.
Without loss of generality, assume that $p = \binom{q}{2}$ for some integer $q \geq  c \cdot a^{1/2} / \log^{1/4} a$, where $c$ is a constant.
Let $(B_{x,y})_{\{x,y\}\in \binom{[q]}{2}}$ be the family of branch sets of the clique minor of size $p$ in $H$.
Observe that for every $x \in [q]$, the subgraph of $H$ induced by $\bigcup_{y \in [q] \setminus \{x\}} B_{x,y}$ is connected, let $T_x$ be its spanning tree.
Note that for every two distinct $x,y \in [q]$, the trees $T_x$ and $T_y$ intersect in $B_{x,y}$. 
On the other hand, every vertex and every edge of $H$ is contained in at most two trees $T_x$.

For every $i \in [a]$, let $e_i$ be the last edge of $P_i$, whose tail is in $A_i$.
Note that $e_i$ is well-defined, as the set $B_i$ follows $A_i$ on $P_i$, see~\cref{fig:walkWe}~(left).
For every edge $e = ij \in E(H)$, let $W_e$ be a closed walk in $G$ obtained as follows. We start with $P_{i,j}$, and then we follow $P_j$ until we arrive at the starting vertex of $P_{j,i}$. Then we follow $P_{j,i}$, and then $P_i$ until we close the walk, see~\cref{fig:walkWe}~(right).
Note that $W_e$ contains both $e_i$ and $e_j$.
For every $x \in [q]$, define a subgraph $G_x$ of $G$ as the union of all walks $W_e$ for all $e \in E(T_x)$.
Since for every $e = ij \in E(H)$, the walk $W_e$ contains $e_i$ and $e_j$, and $T_x$ is connected, the graph $G_x$
is strongly connected and contains all edges $e_i$ for $i \in V(T_x)$. 
Thus, since every two trees $T_x$ and $T_y$ intersect in $B_{x,y}$,
the family $(G_{x})_{x \in [q]}$ is a bramble of size $q \geq c \cdot  \left(\frac{a^{1/2}}{\log^{1/4} a}\right)$ in $G$.

Now let us argue that the congestion of the constructed bramble is at most $2\alpha+2$.
Each vertex is in at most $\alpha$ paths $P_{i,j}$ and in at most one path $P_i$. Thus each vertex appears in at most $\alpha+1$ walks $W_e$.
Each walk $W_e$ might appear in at most two sets of the bramble, so the overall congestion is at most $2\alpha+2$.
\end{proof}

\begin{figure}
\includegraphics[scale=1,page=1]{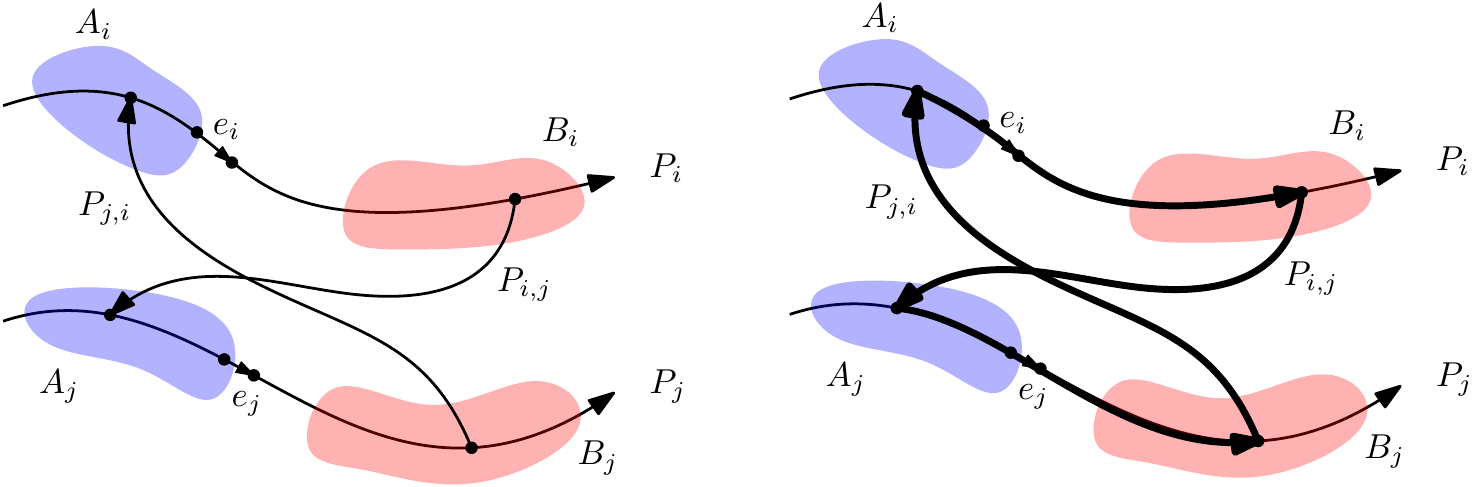}
\caption{The walk $W_e$ constructed in the proof of~\cref{lem:sparse1}.}\label{fig:walkWe}
\end{figure}

\begin{lemma}[Sparse winning scenario, wrapped]\label{lem:sparse2}
Let $c$ be the constant from~\cref{lem:sparse1}.
Let $a > 1$, $b \geq 1$, and let $(P_i,A_i,B_i)_{i=1}^a$ be an $(a,b)$-path system in $G$.
Let $\mathcal{I}$ be a subset of $[a] \times [a] \setminus \{(i,i) \mid i \in [a]\}$, such that $|\mathcal{I}| \geq 0.6 \cdot a(a-1)$.
For $(i,j) \in \mathcal{I}$, let $\LL_{i,j}$ be a linkage of size $b$ from $B_i$ to $A_j$.
Assume that for some integer $d$, the intersection graph of $\LL_{i,j}$ and $\LL_{i',j'}$ for every distinct $(i,j), (i',j') \in \mathcal{I}$
is $d$-degenerate. 
If $b > 4\cdot e \cdot a^2 \cdot d$, then $G$ contains a bramble of congestion at most $4$ and size at least $c \cdot  \left(\frac{a^{1/2}}{\log^{1/4} a}\right)$.
\end{lemma}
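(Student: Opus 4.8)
The plan is to reduce to \cref{lem:sparse1} with congestion parameter $\alpha = 1$: once we can pick, for each $(i,j)\in\mathcal{I}$, a single path $P_{i,j}\in\LL_{i,j}$ so that the family $\{P_{i,j}\mid (i,j)\in\mathcal{I}\}$ is pairwise vertex-disjoint (i.e.\ has congestion $1$), \cref{lem:sparse1} immediately yields a bramble of congestion at most $2+2\cdot 1 = 4$ and size at least $c\cdot\left(\frac{a^{1/2}}{\log^{1/4} a}\right)$, which is exactly the claim. Note that each $P_{i,j}$, being a path of the linkage $\LL_{i,j}$, still runs from $B_i$ to $A_j$, so the hypotheses of \cref{lem:sparse1} are met.

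To extract such disjoint representatives I would apply the Lov\'{a}sz Local Lemma argument of Reed and Wood, \cref{lem:LLL}. Build an auxiliary graph $H$ whose vertices are the paths appearing in the linkages $\LL_{i,j}$, $(i,j)\in\mathcal{I}$ (identifying a path with its vertex set, and taking disjoint copies across different linkages if some path happens to occur in several), colored by the linkage they belong to; thus $H$ has $r\df|\mathcal{I}|$ color classes $V_{i,j}$, each of size $b$, and two vertices of $H$ are adjacent exactly when the corresponding paths share a vertex of $G$. Since the paths within a single linkage are pairwise vertex-disjoint, for distinct $(i,j),(i',j')\in\mathcal{I}$ the induced subgraph $H[V_{i,j}\cup V_{i',j'}]$ is precisely the intersection graph of $\LL_{i,j}$ and $\LL_{i',j'}$, hence $d$-degenerate by assumption. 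Moreover $r\le a(a-1)<a^2$, so $4e(r-1)d<4ea^2 d<b=|V_{i,j}|$, and $r\ge 2$ because $a\ge 2$ forces $|\mathcal{I}|\ge 0.6\cdot a(a-1)\ge 1.2$. Thus \cref{lem:LLL} applies and returns an independent set containing one path $P_{i,j}\in\LL_{i,j}$ per $(i,j)\in\mathcal{I}$; independence in $H$ means precisely that these paths are pairwise vertex-disjoint.

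Plugging $\{P_{i,j}\mid(i,j)\in\mathcal{I}\}$ into \cref{lem:sparse1} with $\alpha=1$ then finishes the proof. The one corner case to dispatch first is $d=0$: then no two paths from different linkages intersect at all, so an arbitrary choice of one path per linkage already has congestion $1$ and we may invoke \cref{lem:sparse1} directly; otherwise $d\ge 1$ and the argument above goes through. I do not anticipate a genuine obstacle here — the lemma is essentially \cref{lem:sparse1} repackaged through \cref{lem:LLL}, and the numeric hypothesis $b>4ea^2 d$ is calibrated exactly so that the size condition of \cref{lem:LLL} holds.
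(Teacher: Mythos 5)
Your proposal is correct and is essentially the same proof as in the paper: both construct the auxiliary $|\mathcal{I}|$-partite graph $H$ on the union of linkage paths, invoke~\cref{lem:LLL} using $|\mathcal{I}| \le a(a-1)$ and $b > 4ea^2d$ to extract pairwise disjoint representatives $P_{i,j}$, and then apply~\cref{lem:sparse1} with $\alpha = 1$. The only additions beyond the paper's argument are the explicit checks that $r \ge 2$ and that $d$ is positive (so that~\cref{lem:LLL} is formally applicable), which are reasonable but minor.
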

\begin{proof}
Construct an auxiliary graph $H$, whose vertices are paths in $\bigcup_{(i,j) \in \mathcal{I}} \LL_{i,j}$.
Two paths are adjacent in $H$ if they contain a common vertex.
Note that each set $\LL_{i,j}$ is independent in $H$, so $H$ is $|\mathcal{I}|$-partite.
Furthermore, as for each  $(i,j), (i',j') \in \mathcal{I}$, the graph $H[ \LL_{i,j} \cup  \LL_{i',j'}]$ is precisely the intersection graph of $\LL_{i,j}$ and $\LL_{i',j'}$, it is $d$-degenerate.
Finally, since $\mathcal{I} \subseteq [a] \times [a]$, we have
\begin{align*}
b & \geq 4 \cdot e \cdot a^2 \cdot d \geq 4 \cdot e \cdot (|\mathcal{I}|-1) \cdot d.
\end{align*}
Thus, applying~\cref{lem:LLL} to $H$ yields a single path $P_{i,j} \in \LL_{i,j}$ for each $(i,j) \in \mathcal{I}$,
such that the paths in $\{ P_{i,j} \}_{(i,j) \in \mathcal{I}}$ are pairwise disjoint.

Now we observe that the set $\mathcal{I}$ and the family $\{ P_{i,j} \}_{(i,j) \in \mathcal{I}}$ of paths satisfy the assumptions of~\cref{lem:sparse1} with $\alpha=1$. Thus the application of~\cref{lem:sparse1} yields a desired bramble.
\end{proof}

\subsection{Closed Walks and Threaded Linkages}\label{sec:threaded}
In this section, we introduce a key object; the threaded linkage that we will use as a main building block in our main proof.
Informally, we want to order and connect paths within the linkage.
In order to achieve it, we construct one long walk, which contains all the paths from the linkage interconnected by walks denoted as threads.
Our ultimate goal is to find a collection of closed walks, each containing a path from a linkage or, in case of two linkages, a collection of closed walks, each containing a path from both linkages.
The latter outcome is provided by another basic tool: Bowtie lemma, which might be useful on its own.
This concept was essentially proved and used in~\cite{DBLP:conf/esa/MasarikMPRS19,DBLP:journals/corr/abs-1907-02494} in a slightly different setting.
We will describe the differences later.

\begin{definition}[Threaded linkage]\label{def:threaded}
A \emph{threaded linkage} is a pair $(W,\LL)$ where $\LL = \{L_1,L_2,\ldots,L_\ell\}$ is a linkage
and $W$ is a walk such that there exist $\ell - 1$ paths $Q_1, Q_2, \ldots, Q_{\ell - 1}$ such that $W$ is the concatenation of $L_1,Q_1, L_2, Q_2, \ldots,Q_{\ell-1},L_\ell$ in that order.
The paths $Q_i$ are called \emph{threads}.
A threaded linkage $(W,\LL)$ for $W = (L_1,Q_1,\ldots,Q_{\ell-1},L_\ell)$ is \emph{untangled} if for every $i$, the thread $Q_i$ may only intersect the rest of $W$ in $L_i$ or $L_{i+1}$.
\end{definition}

The \emph{size} of an (untangled) threaded linkage $(W,\LL)$ is the size of linkage $\LL$ and its \emph{overlap} is the overlap of the walk $W$.

\begin{lemma}[Construction of threaded linkages]\label{lem:Wij}
Let $(P_i,A_i,B_i)_{i=1}^a$ be $(a,b)$-path system for $a,b\in \mathbb{N}$.
Then, for all $i,j \in [a]$, there exists a linkage $\LL_{i,j}$ from $B_i$ to $A_j$ and threaded linkage $(W_{i,j},\LL_{i,j})$ of size $b$ and overlap at most 3.
\end{lemma}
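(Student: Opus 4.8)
The plan is to build the linkage $\LL_{i,j}$ first and then thread its paths together one by one, using well-linkedness of $\bigcup_k A_k \cup B_k$ each time to route a thread, while keeping track of overlap. First I would obtain $\LL_{i,j}$: since $A_i, B_i \subseteq V(P_i)$ and $B_i$ appears after $A_i$ on $P_i$, the set $X := \bigcup_{k=1}^a A_k \cup B_k$ is well-linked, so taking $A := B_i$ and $B := A_j$ (both of size $b$) inside $X$ yields $b$ vertex-disjoint $B_i$–$A_j$ paths in $G - (X \setminus (B_i \cup A_j))$; call these $L_1, \dots, L_b$, ordered so that $L_m$ starts at the $m$-th vertex of $B_i$ (in $P_i$-order, say). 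These paths avoid every element of $X$ except their own endpoints.

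Next I would construct the threads $Q_1, \dots, Q_{b-1}$ inductively. Thread $Q_m$ should go from the endpoint of $L_m$ (a vertex $a_m \in A_j$) to the start of $L_{m+1}$ (a vertex $b_{m+1} \in B_i$). The natural route is: walk backward along $P_j$ from $a_m \in A_j$ to some earlier vertex — wait, we need to reach $B_i$, so instead I would use well-linkedness again, or more simply route along the path system's own paths. Concretely: from $a_m \in A_j \subseteq V(P_j)$ follow $P_j$ forward to a vertex of $B_j$, then we are stuck; the cleaner approach is to invoke well-linkedness once more to get a single path (a linkage of size $1$) from $A_j$ (specifically $\{a_m\}$, after deleting from $X$ all vertices except $a_m$ and $b_{m+1}$) to $\{b_{m+1}\} \subseteq B_i$ inside $G - (X \setminus \{a_m, b_{m+1}\})$. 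Since the $L$'s are internally disjoint from $X$, I must be careful: the thread $Q_m$ lives in $G$, so it may reuse internal vertices of the $L$'s or of previously built threads. To control this and get the overlap bound, I would instead route $Q_m$ through the segments of $P_i$ and $P_j$: go from $a_m$ backward along $P_j$ only if needed, or — the honest move — first re-route so that each thread uses only a \emph{sub-path of $P_i$ between two vertices of $B_i$ together with a sub-path of $P_j$ between two vertices of $A_j$, glued by a short well-linked connector}. The key insight is that $P_i$ and $P_j$ are themselves paths, so walking along them contributes at most one occurrence per vertex; the well-linked connectors can be chosen vertex-disjoint from each other by applying well-linkedness to the \emph{full} sets $B_i$ and $A_j$ at once rather than one pair at a time, which is exactly what produces $\LL_{i,j}$ in reverse — i.e. the "return linkage" $\LL^{\leftarrow}$ from $A_j$ to $B_i$.

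So the cleaner construction: apply well-linkedness twice to get (a) the forward linkage $\LL_{i,j} = \{L_1, \dots, L_b\}$ from $B_i$ to $A_j$, and (b) a backward linkage $\{L_1', \dots, L_{b-1}'\}$ of size $b-1$ from $b-1$ chosen vertices of $A_j$ to $b-1$ chosen vertices of $B_i$; then let $Q_m$ be the concatenation of: a sub-path of $P_j$ from the end of $L_m$ to the start of $L_m'$, then $L_m'$, then a sub-path of $P_i$ from the end of $L_m'$ to the start of $L_{m+1}$. Choosing the matchings so these $P_i$- and $P_j$-segments are pairwise disjoint (possible since endpoints lie in fixed order along $P_i$ and $P_j$), each vertex of $P_i$ or $P_j$ is used in at most one $L$-path plus at most one thread-segment, and each connector $L_m'$ is used once; a vertex of $V(P_i)$ might additionally lie on one forward path $L$ (only at endpoints) — a careful count gives overlap at most $3$. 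For untangledness I would further arrange that $Q_m$'s connector $L_m'$ avoids $X$ except at its two endpoints, so $Q_m$ meets the rest of $W$ only within $L_m$ (at its start, along the shared $P_j$-segment boundary) or $L_{m+1}$ (at its end, along the shared $P_i$-segment).

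The main obstacle is the overlap bound: I expect the delicate part is showing that the $P_i$-segments used by different threads, the $P_j$-segments, the forward $L$-paths, and the backward connectors together load any single vertex at most $3$ times — this requires choosing the endpoint-matchings compatibly with the linear orders on $P_i$ and $P_j$ so that the $P$-segments are disjoint, and then observing that the only multiply-used vertices are those of $V(P_i) \cup V(P_j)$, which see at most one forward $L$, one $P$-segment of a thread, plus possibly one more from the other index's structure. Everything else (applying well-linkedness, concatenating, verifying the walk structure $L_1, Q_1, \dots, Q_{b-1}, L_b$) is routine bookkeeping.
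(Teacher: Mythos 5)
Your plan correctly identifies the two key ingredients — a forward linkage $\LL_{i,j}$ from $B_i$ to $A_j$ and a backward linkage from $A_j$ to $B_i$, both by well-linkedness — and the idea of chaining them together using segments of the $P$-paths. However, there is a genuine gap at the step where you claim to ``choose the endpoint-matchings compatibly with the linear orders on $P_i$ and $P_j$.'' Well-linkedness only guarantees the \emph{existence} of a linkage; it does not let you prescribe which source maps to which sink. The backward linkage therefore induces an \emph{arbitrary} permutation on the endpoints, and your attempt to thread $L_1, L_2, \ldots, L_b$ in linear order — with each thread $Q_m$ routing from the terminal vertex of $L_m$ to the initial vertex of $L_{m+1}$ — cannot be enforced. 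Your fallback of patching up mismatched endpoints with sub-paths of $P_i$ and $P_j$ also fails because these paths are directed: if the backward path terminates at a vertex of $B_i$ that lies \emph{after} $b_{m+1}$ on $P_i$, there is no forward $P_i$-segment reaching $b_{m+1}$, and symmetrically on $P_j$.

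The paper's proof sidesteps exactly this obstacle. It accepts the arbitrary permutation $\pi_{i,j}$ induced by composing the forward linkage with the backward linkage, decomposes $\pi_{i,j}$ into its cycles, and observes that each cycle yields a closed walk built from distinct forward and backward paths. The walk $W_{i,j}$ is then obtained by following $P_i$ and, each time the starting vertex of a chosen cycle-representative is met, detouring around the entire closed walk of that cycle before resuming on $P_i$. Because the cycle representatives are triggered in $P_i$-order, the $P_i$-segments are pairwise disjoint, and the overlap bound $3$ falls out immediately: one occurrence from forward paths, one from backward paths, one from $P_i$-segments — no use of $P_j$ at all. You would need to replace your ``choose the matching'' step with this cycle decomposition (or some equivalent device) before your construction would go through; the remaining bookkeeping in your proposal is on track.
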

\begin{proof}
We construct a threaded linkage for each $i,j\in[a]$ separately.
For every $i, j \in [a]$, we fix a linkage $\LL_{i,j}$ from $B_i$ to $A_j$ and a linkage $\RLL_{i,j}$ from $A_j$ to~$B_i$; these linkages exist by well-linkedness of $\bigcup_{i = 1}^{a}A_i \cup B_i$.

For every $P \in \LL_{i,j}$ let $\rho_{i,j}(P)$ be the path of $\RLL_{i,j}$ that starts at the ending point of $P$ and let $\pi_{i,j}(P)$ be the path of $\LL_{i,j}$ that starts at the ending point of $\rho_{i,j}(P)$. Note that $\pi_{i,j}$ is a permutation of $\LL_{i,j}$.
Let $\cycles_{i,j}$ be the family of cycles of the permutation $\pi_{i,j}$, observe that every such a cycle corresponds to a closed walk composed of the paths in $\LL_{i,j}$ and $\RLL_{i,j}$.

From every cycle $C \in \cycles_{i,j}$ we arbitrarily select one path; we call it the \emph{representative} of $C$.
Let $C_1,C_2,\ldots,C_r$ be the elements of $\cycles_{i,j}$ in the order of the appearance of the starting points of their representatives along $P_i$. 
Define the walk $W_{i,j}$ as follows: follow $P_i$ and for every $\ell \in [r]$, when we encounter the starting point of the representative of $C_\ell$, follow the respective closed walk corresponding to $C_\ell$,
returning back to the starting point of the representative of $C_\ell$, and then continue going along $P_i$.
Finally, trim $W_{i,j}$ so that it starts and ends with a path of $\LL_{i,j}$, as required by the definition of a threaded linkage.
  
Recall that the size of $(W_{i,j},\LL_{i,j})$ is the size of the linkage $\LL_{i,j}$, i.e., $b$.
Now let us argue about the overlap. The walk $W_{i,j}$ consists of the following subwalks: (1) paths of $\LL_{i,j}$ (each path is used exactly once), (2) paths of $\RLL_{i,j}$ (each path is used at most once), and (3) some pairwise vertex-disjoint subpaths of~$P_i$.
Note that the subwalks within each of these three groups are vertex-disjoint. Thus the overlap of $(W_{i,j},\LL_{i,j})$ is at most 3.
\end{proof}

Now, we refine the threaded linkage to get at least one good outcome: either a collection of closed walks, each containing a path from the linkage or an untangled threaded linkage.

\begin{lemma}[Construction of closed walks or untangled threaded linkages]\label{lem:Wij2}
 Let $(W,\LL)$ be a threaded linkage of size $b$ and of overlap $\alpha$.
 Let $x, d \in \mathbb{N}$ such that $b\ge xd+(d-1)$.
 Then one of the following exists:
\begin{enumerate}
\item A family $\mathcal{Z}$ of $d$ closed walks, such that for every walk $W \in \mathcal{Z}$
  there exists a distinct path $P(W) \in \LL$ that is a subwalk of $W$, and $\mathcal{Z}$ has overlap $\alpha$; or
\item an untangled threaded linkage $(W', \LL')$ where $W'$ is a subwalk $W$ and $\LL' \subseteq \LL$ is of size at least $x$. 
In particular, $(W', \LL')$ is of overlap $\alpha$. 
\end{enumerate}
\end{lemma}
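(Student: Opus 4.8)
The plan is to process the threaded linkage $(W, \LL)$ greedily from left to right, trying to build the untangled threaded linkage of outcome~2, and show that whenever this process is blocked we can instead harvest a closed walk for outcome~1. Write $W = (L_1, Q_1, L_2, Q_2, \ldots, Q_{\ell-1}, L_\ell)$ with $\ell = b$. The key local observation is this: a thread $Q_i$ can fail to be ``untangled'' only because it intersects some $L_s$ with $s \notin \{i, i+1\}$ (it may also re-enter an earlier or later thread, but a thread always lies between its two flanking $L$'s in the walk order, so the offending intersection we can actually exploit is with a linkage path). If $Q_i$ hits $L_s$ for some $s < i$, then the portion of $W$ starting somewhere on $L_s$, running along $W$ through $L_{s+1}, \ldots, L_i$, then along $Q_i$ back to $L_s$, closes up into a closed walk that contains the path $L_{i}$ (or at least one of the $L_s, \ldots, L_i$) as a subwalk; symmetrically if $Q_i$ hits some $L_s$ with $s > i+1$. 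So each ``bad'' thread yields one closed walk, and after extracting such a closed walk we can delete a block of consecutive linkage paths from further consideration and continue scanning.

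First I would set up the scan precisely. Partition $\{L_1, \ldots, L_\ell\}$ into maximal runs of consecutive indices on which every intermediate thread is untangled relative to the whole of $W$. Formally, walk through the threads $Q_1, Q_2, \ldots, Q_{\ell-1}$ in order; whenever $Q_i$ is untangled (only meets $W$ inside $L_i \cup L_{i+1}$) it extends the current run, and whenever $Q_i$ is tangled it both closes the current run and — crucially — supplies a distinct closed walk containing one of the linkage paths involved, after which we start a fresh run at $L_{i+1}$. If the number of tangled threads encountered is at least $d$, we are in outcome~1: we have produced $d$ closed walks $Z_1, \ldots, Z_d$, each containing a distinct path of $\LL$ as a subwalk. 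The disjointness/distinctness of the $P(W)$'s follows because the runs are pairwise index-disjoint and we can always pick, for the closed walk associated with a tangled $Q_i$, the linkage path at the boundary $L_{i}$ or $L_{i+1}$ that belongs to the run just closed (respectively just opened); a little bookkeeping makes sure no $L_s$ is claimed twice. The overlap bound is immediate: every closed walk is a subwalk of $W$, so the family $\Zz$ has overlap at most $\alpha$ (indeed each $\oc(v, Z_t) \le \oc(v, W)$ and the $Z_t$ use pairwise disjoint stretches of $W$, but even without disjointness the bound $\alpha$ holds because a closed walk re-using part of $W$ cannot increase any occurrence count beyond what $W$ already has — here I'd be slightly careful and just argue the $Z_t$ are edge-disjoint segments of $W$).

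Otherwise, fewer than $d$ threads are tangled, so scanning through the $\ell - 1 = b - 1$ threads we have at most $d - 1$ ``breaks'', hence the $\ell$ linkage paths are split into at most $d$ runs, and by pigeonhole one run contains at least $\lceil \ell / d \rceil \ge \lceil b/d \rceil \ge x$ consecutive indices (using $b \ge xd + (d-1)$, so $b - (d-1) \ge xd$ and the largest run, which loses at most $d-1$ paths to the breaks, has size at least $x$). Take $\LL'$ to be this run of linkage paths and $W'$ the corresponding contiguous subwalk of $W$ (from the start of the first $L$ in the run to the end of the last); by construction every thread inside $W'$ is untangled relative to all of $W$, hence a fortiori relative to $W'$, so $(W', \LL')$ is an untangled threaded linkage of size $\ge x$. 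Since $W'$ is a subwalk of $W$, its overlap is at most that of $W$, namely $\alpha$. That gives outcome~2.

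The step I expect to be the main obstacle is the careful definition of ``tangled'' and the extraction of a closed walk from a single tangled thread, together with making sure distinct tangled threads yield closed walks with distinct designated subpaths $P(W) \in \LL$. The subtlety is that a thread $Q_i$ is, by Definition~\ref{def:threaded}, only allowed (when untangled) to touch $L_i$ and $L_{i+1}$, but a tangled $Q_i$ could in principle reach back into a linkage path that another tangled thread also reaches, so the assignment of linkage paths to closed walks needs to be done consistently (e.g.\ always charge the tangled thread $Q_i$ to $L_{i+1}$ and verify this is injective over the set of tangled indices). I'd handle this by processing left-to-right and maintaining the invariant that all linkage paths with index $\le i+1$ have been either placed into a completed run or charged to a closed walk, which makes the injectivity transparent.
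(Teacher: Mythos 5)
Your overall strategy---trade ``many tangles'' (giving closed walks) against ``a long tangle-free stretch'' (giving an untangled threaded linkage)---is the same as the paper's, and your treatment of outcome~2 is essentially correct: a run of consecutive linkage paths whose internal threads are untangled relative to all of $W$ stays untangled when restricted to the corresponding subwalk, and the pigeonhole count goes through. The problem is in outcome~1, specifically the overlap bound.

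You assign one closed walk to each tangled thread $Q_i$. Nothing in that construction forces the closed walks attached to different tangled threads to occupy disjoint stretches of $W$. For instance, if $Q_2, Q_3, \ldots, Q_{\ell-1}$ all meet $L_1$ (at distinct vertices) and are otherwise disjoint, every one of them is tangled (backwards), and the closed walk you would extract from $Q_i$ runs from the intersection vertex on $L_1$, along $W$ through $L_2,\ldots,L_i$, and back via $Q_i$. These closed walks are nested; each uses all of $L_2$, so a vertex of $L_2$ is covered by every member of the family, giving overlap about $\ell-2$ rather than $\alpha$. The parenthetical fallback ``even without disjointness the bound $\alpha$ holds because a closed walk re-using part of $W$ cannot increase any occurrence count beyond what $W$ already has'' is false: overlap is $\max_v \sum_{Z\in\mathcal{Z}} \oc(v,Z)$, a sum over the family, so re-using a vertex in many members multiplies its contribution. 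And the statement you say you would actually prove---that the $Z_t$ are pairwise disjoint segments of $W$---is simply not delivered by ``one closed walk per tangled thread.'' (A second, more local symptom of the same issue: two tangled threads $Q_i$ and $Q_j$ whose only tangle is that they intersect each other yield the very same closed walk, so the count of tangled threads does not count closed walks.)

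The paper avoids this by not keying the decomposition to threads at all. It defines a \emph{useful intersection} as a pair $p<q$ with $W[p]=W[q]$ such that $W[p,q]$ fully contains a path of $\LL$, and greedily selects useful walks $I_\xi = W[p_\xi,q_\xi]$ subject to $p_{\xi+1} > q_\xi$. This makes the chosen stretches pairwise disjoint by construction, which gives the overlap bound for free and also makes the designated paths $P(I_\xi)$ automatically distinct; the complementary segments $I'_\xi$ each contain no useful intersection, which is exactly the property that makes the trimmed subwalk untangled. To salvage your plan you would have to resume scanning past the end of the extracted closed walk rather than at $L_{i+1}$, after which the breakpoints no longer coincide with tangled threads and the pigeonhole bookkeeping has to be redone---at that point you have essentially reconstructed the paper's argument, just phrased in terms of repeated vertices rather than threads.
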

\begin{proof}
Let  $z$ be the length of $W$ (i.e., the number of occurrences of vertices).
For $1 \leq p \leq q \leq z$, by $W[p]$ we will denote the $p$-th vertex of $W$ and by $W[p,q]$ we denote the subwalk $W[p],W[p+1],\ldots,W[q]$.

A \emph{useful walk of $W$} is a subwalk $W[p,q]$ of $W$,
such that $W[p] = W[q]$ and $W[p,q]$ contains at least one path of $\LL$ as a subwalk.
The pair of indices $(p,q)$ is called a \emph{useful intersection}.

We greedily construct a sequence $I_1,I_2,\ldots,I_\ell$ of useful walks as follows:
$I_1 = W[p_1,q_1]$ is a useful walk of $W$ such that $q_1$ is the smallest possible,
and subsequently $I_{\xi+1} = W[p_{\xi+1},q_{\xi+1}]$ is a useful walk of $W$ such that $p_{\xi+1} > q_\xi$ and $q_{\xi+1}$ is the smallest possible.
The greedy construction stops when there are no useful walks starting after $q_\ell$.

First, consider the case that $\ell \geq d$.
Then every useful walk $I_\xi$ is a closed walk and, as $W$ is of overlap $\alpha$, the family
$\{I_\xi \mid \xi \in [\ell]\}$ is of overlap $\alpha$. 
Furthermore, by the definition of a useful walk, every $I_\xi$ contains a distinct path from $\LL$, so we obtain the first desired outcome.

So now consider the case that $\ell < d$.
We select $\ell+1$ subwalks $I'_1,I'_2,\ldots,I'_{\ell+1}$ in $W$ as follows.
The subwalk $I_1'$ is defined as $W[1,q_1-1]$.
Then, for $2 \leq \xi \leq  \ell$, we define $I_\xi'$ as $W[q_{\xi-1}+1, q_{\xi}-1]$.
Finally, we define $I'_{\ell+1} := W[q_{\ell}+1,z]$.

By the construction of the walks $I_\xi$, no $I_\xi'$ contains a useful walk.
Furthermore, the union of all walks $I_\xi'$ covers $W_{i,j}$, except for $q_1,q_2,\ldots,q_\ell$.
Hence, for at least $|\LL| - \ell$ paths $P \in \LL$ it holds there is $\xi \in [\ell-1]$ such that $P$ is fully contained in $I_\xi'$.
So there is some $\xi \in [\ell+1]$, such that $I_\xi'$ contains at least
\[
  \frac{|\LL| - \ell}{\ell+1} ~\geq~  \frac{b - (d-1)}{d} ~\geq~ x
\]
paths of $\LL$, where the last inequality holds because $b \geq xd + (d - 1)$ by precondition.
Let $\LL' \subseteq \LL$ be the set of paths contained in $I'_\xi$ and let $W'$ be the walk $I_\xi'$, trimmed so that it starts and ends with a path of $\LL'$. We note that $(W',\LL')$ is an untangled threaded linkage as $I_\xi'$ contains no useful intersection.
Thus, in this case, we obtain the second desired outcome.
\end{proof}

\begin{lemma}[Bowtie lemma]\label{lem:bowtie}
For $d \geq 1$, let $(W_1,\LL_1)$ and $(W_2,\LL_2)$ be two threaded linkages of overlap $\alpha$ and $\beta$, respectively,
 such that the intersection graph $I(\LL_1,\LL_2)$   of $\LL_1$ and $\LL_2$ is not $(2^9\cdot 5\cdot d)$-degenerate.
Then there is a family $\mathcal{Z}$ of $d$
closed walks such that every walk in $\mathcal{Z}$ contains at least 
one path of $\LL_1$ and one path of $\LL_2$ as a subwalk,
and the congestion of $\mathcal{Z}$ is at most $\alpha+\beta$.

\noindent Furthermore,
if $(W_1,\LL_1)$ ($(W_2,\LL_2)$, respectively) is untangled, then $\Zz$ is of congestion at most $\beta+1$ ($\alpha+1$, respectively),
and if both $(W_1,\LL_1)$, $(W_2,\LL_2)$ are untangled, then $\Zz$ is of congestion at most $2$.
\end{lemma}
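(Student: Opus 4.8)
The plan is to build an auxiliary bipartite-like structure on the paths of $\LL_1$ and $\LL_2$ recording their intersections, extract a large clique minor in the intersection graph $I(\LL_1,\LL_2)$ via \cref{thm:kostochka}, and then convert the branch sets of that minor into closed walks that each traverse one path from $\LL_1$ and one from $\LL_2$. Concretely, since $I(\LL_1,\LL_2)$ is not $(2^9\cdot 5\cdot d)$-degenerate and $2^9\cdot 5 \geq c_{\textsf{KT}}$ for the relevant regime (after absorbing the $\sqrt{\log}$ factor into the constant), \cref{thm:kostochka} gives a $K_{m}$ minor for $m$ large enough that we can select $d$ of its branch sets $Z_1,\dots,Z_d$ together with, for each pair, an edge of $I(\LL_1,\LL_2)$ witnessing adjacency. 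Here I would be a little careful: a clean way is to take a $K_{3d}$ minor (or $K_{d+1}$ and route through a common ``hub''), partition its branch sets into $d$ groups, and in each group pick a path $L^1_k \in \LL_1$ and $L^2_k \in \LL_2$ that intersect — possible because every branch set of a $K$-minor in $I(\LL_1,\LL_2)$ induces a connected subgraph, hence contains at least one vertex from each color class or is adjacent to one that does; a slightly more robust route is to note that a $K_{2d}$ minor in $I(\LL_1,\LL_2)$ restricted to the bipartition-respecting edges still yields $d$ disjoint intersecting pairs by a greedy matching argument.

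The second and geometrically more delicate step is turning each intersecting pair $(L^1_k, L^2_k)$ into a single closed walk $Z_k$ of the right overlap. Fix a common vertex $v_k \in V(L^1_k)\cap V(L^2_k)$. Using the threaded structure of $W_1$, I would go from $v_k$ along $L^1_k$ to its endpoint, then follow $W_1$ (through threads and other linkage paths) all the way around and back along $L^1_k$ to $v_k$ — wait, that double-uses $L^1_k$; instead the right construction (mirroring \cref{lem:Wij2} and \cref{lem:sparse1}) is: walk along $L^1_k$ from its start to its end, then traverse the portion of $W_1$ from the end of $L^1_k$ forward or backward to reach the start of $L^1_k$ — but $W_1$ is not closed. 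The actual fix, and the one the paper surely intends, is to combine the two walks: start at $v_k$, follow $L^1_k$ to its endpoint, follow $W_1$ onward; then we need to get back to $\LL_2$. So the correct recipe: the closed walk $Z_k$ consists of $L^1_k$, a piece of $W_1$, $L^2_k$, and a piece of $W_2$, glued at the intersection vertices and at the shared endpoints $A_i/B_i$ structure implicitly carried by the threaded linkages — i.e. we use $v_k$ to jump from $W_1$ to $W_2$ and back. More precisely, traverse $L^1_k$ up to $v_k$, switch to $L^2_k$ and go to its endpoint, follow $W_2$ until we return to the start of $L^2_k$'s segment... This is exactly the subtle bookkeeping of which sub-walks get reused; the cleanest formulation is that $Z_k$ is the symmetric-difference-style concatenation of the cyclic closure obtainable because $\LL_1$ and $\LL_2$ both link the same well-linked set in the ambient path system — but since the lemma statement only gives us two abstract threaded linkages, the intended construction must be: $Z_k := $ (segment of $W_1$ containing $L^1_k$ and ending/starting appropriately) followed by (segment of $W_2$ containing $L^2_k$), closed up through $v_k$. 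I would therefore present $Z_k$ as: start at $v_k$; follow $L^1_k$ forward to its end, then follow $W_1$ backward to the start of $L^1_k$; then — here's the trick — since that start vertex equals the start of some path of $\LL_1$, and... Honestly, the robust statement is: $Z_k$ is obtained by concatenating the unique $v_k$-to-$v_k$ closed walk inside $W_1$ that uses $L^1_k$ once (exists because in a threaded linkage $W$ each linkage path together with a return along $W$ — no). Given the uncertainty, in the writeup I will present the bowtie construction as gluing a $W_1$-derived walk through $L^1_k$ with a $W_2$-derived walk through $L^2_k$ at $v_k$, referring to the analogous closure construction in \cref{lem:Wij}.

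The overlap/congestion accounting is then routine and is the payoff of the three-part case analysis in the statement. Each vertex of $G$ lies in at most $\alpha$ occurrences across the sub-walks of $W_1$ used and at most $\beta$ across those of $W_2$, so a vertex appears in at most $\alpha+\beta$ of the walks $Z_k$ in the generic case. If $(W_1,\LL_1)$ is untangled, then each thread of $W_1$ meets the rest of $W_1$ only in the two adjacent linkage paths, which means the $W_1$-portions used in distinct $Z_k$'s are essentially the linkage paths $L^1_k$ themselves (pairwise disjoint, overlap contribution $1$) plus non-reused thread pieces, giving congestion $\beta+1$; symmetrically $\alpha+1$; and if both are untangled, congestion $2$. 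The main obstacle is genuinely the middle step — making the closed-walk construction precise enough that (i) each $Z_k$ really contains a path of $\LL_1$ and a path of $\LL_2$ as a \emph{subwalk}, and (ii) the untangled refinements give exactly the claimed drop in congestion; the clique-minor extraction and the counting are comparatively mechanical given \cref{thm:kostochka}.
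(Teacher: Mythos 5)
The core gap is in the closed-walk construction, which you correctly identify as the delicate step but do not resolve. A single intersection vertex $v_k$ between $L^1_k \in \LL_1$ and $L^2_k \in \LL_2$ does \emph{not} allow you to close a walk: $W_1$ and $W_2$ are open walks, so after jumping from $W_1$ to $W_2$ at $v_k$ there is no way back. That is why the construction is called a ``bowtie'' — you need \emph{two} crossings. The paper obtains them by arranging for a sub-collection of paths from each linkage whose bipartite intersection graph has minimum degree at least $3$: ordering the selected paths of $\LL_1$ along $W_1$ as $P_1,\dots,P_z$ and the selected paths of $\LL_2$ along $W_2$ as $P'_1,\dots,P'_{z'}$, the degree-$\geq 3$ condition forces the last path $P_z$ to meet some $P'_{q'}$ with $q' < z'-1$, and the last path $P'_{z'}$ to meet some $P_q$ with $q < z-1$. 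Then the closed walk goes from $v \in P_z \cap P'_{q'}$, forward along $W_2$ through $P'_{q'+1},\dots$ to $w \in P'_{z'} \cap P_q$, then forward along $W_1$ through $P_{q+1},\dots$ back to $v$. The full paths $P_{q+1}$ and $P'_{q'+1}$ appear as subwalks precisely because the crossings happen at a \emph{strictly earlier} index than the last. Your proposal never produces this two-crossing structure.

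Your choice of tool is also wrong and this is what prevents you from getting the two crossings and the congestion bound. You reach for \cref{thm:kostochka} and a clique minor, but a clique minor's branch sets are arbitrary connected subsets of $I(\LL_1,\LL_2)$ and need not respect the orderings of $\LL_1$ along $W_1$ and of $\LL_2$ along $W_2$. The paper instead invokes \cref{lem:partition} (the Partitioning Lemma) with $k=d$, $r=5$, applied to the bipartite graph $I(\LL_1,\LL_2)$ with the two sides ordered as they appear on $W_1$ and $W_2$. This yields $d$ pairwise-disjoint \emph{segments} $\LL_1^1,\dots,\LL_1^d$ of $\LL_1$ and $\LL_2^1,\dots,\LL_2^d$ of $\LL_2$ such that each $I(\LL_1^i\cup\LL_2^i)$ has average degree $\geq 5$ (hence contains a min-degree-$3$ subgraph after trimming). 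The interval structure is exactly what makes the congestion accounting work: distinct $Z_i$ use disjoint portions of $W_1$ (and of $W_2$), so a vertex with $\alpha$ occurrences on $W_1$ contributes to at most $\alpha$ of the $Z_i$, and the refined bounds for untangled linkages fall out because threads only touch their two neighboring linkage paths, which lie in a single segment. Also note that the constant $2^9\cdot 5\cdot d$ in the hypothesis is precisely the minimum-degree requirement $2^9\cdot r\cdot k$ of \cref{lem:partition} with $r=5$, $k=d$ — it has nothing to do with $c_{\textsf{KT}}$ or absorbing a $\sqrt{\log}$ factor, as you suggest.
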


The main difference with version proven implicitly in~\cite{DBLP:conf/esa/MasarikMPRS19,DBLP:journals/corr/abs-1907-02494} is that there only the containment of a subpath of a path in $\LL_1$ and of a subpath of a path in $\LL_2$ was guaranteed as opposed to the whole path we provide in the statement.
For the proof, we make use of the following Partitioning Lemma.

\begin{lemma}[Partitioning Lemma {\cite[Lemma 11]{DBLP:journals/corr/abs-1907-02494}}] \label{lem:partition}
Let $k, r \geq 1$ be two integers and let $G$ be a bipartite graph with bipartition classes $X = \{x_1,x_2,\ldots,x_a\}$ and $Y=\{y_1,y_2,\ldots,y_b\}$ and minimum degree at least $2^9 \cdot r \cdot k$.
Then there are $k$ sets $U_1,U_2,\ldots,U_k$, and $k$ sets $W_1,W_2,\ldots,W_k$, such that:
\begin{compactenum}
\item for each $i \in [k]$ the set $U_i$ is a segment of $X$ and the set $W_i$ is a segment of $Y$,
\item for each distinct $i,j \in [k]$ we have $U_i \cap U_j = \emptyset$ and $W_i \cap W_j = \emptyset$,
\item for every $i \in [k]$, the average degree of the graph $G[U_i \cup W_i]$ is at least $r$.
\end{compactenum}
\end{lemma}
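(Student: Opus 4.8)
The plan is to prove the lemma by induction on $k$, peeling off one rectangle at a time from the ``corner'' of $G$. The base case $k=1$ is immediate: take $U_1 = X$ and $W_1 = Y$; since every vertex has degree at least $2^9 r$, the number of edges is at least $2^9 r \cdot \max(a,b) \ge 2^9 r (a+b)/2$, so the average degree of $G[U_1\cup W_1] = G$ is at least $2^9 r \ge r$. For the inductive step with $k \ge 2$, I would look for a nonempty proper prefix $U = \{x_1,\dots,x_p\}$ of $X$ and a nonempty proper prefix $W = \{y_1,\dots,y_q\}$ of $Y$ such that (a) the average degree of $G[U\cup W]$ is at least $r$, and (b) every vertex of $X\setminus U$ has at most $2^9 r$ neighbours in $W$, and every vertex of $Y\setminus W$ has at most $2^9 r$ neighbours in $U$. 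Given such a pair, set $U_1 := U$, $W_1 := W$. Then $G' := G[(X\setminus U)\cup(Y\setminus W)]$ is a bipartite graph whose two sides are a suffix of $X$ and a suffix of $Y$ (with the inherited orderings), and by (b) its minimum degree is at least $2^9 r k - 2^9 r = 2^9 r(k-1)$. Applying the induction hypothesis with parameter $k-1$ yields $k-1$ pairwise disjoint rectangles $(U_i,W_i)_{i=2}^{k}$ of average degree at least $r$; since each $U_i$ is a segment of the suffix $X\setminus U$ it is a segment of $X$ (and likewise for $W_i$), and all of them are disjoint from $U_1$ and $W_1$. Together with $(U_1,W_1)$ this produces the desired $k$ rectangles, so the whole task reduces to establishing the existence of the corner pair $(U,W)$.

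To find $(U,W)$ I would run a monotone closure process on prefix lengths. Call a vertex of $X$ \emph{$W$-heavy} if it has more than $2^9 r$ neighbours in $W$, and symmetrically for $Y$-vertices; condition (b) is exactly the requirement that $U$ contains every $W$-heavy vertex and $W$ contains every $U$-heavy vertex, and since these are prefixes this just says that $p$ is at least the largest index of a $W$-heavy vertex and $q$ at least the largest index of a $U$-heavy vertex. Start from an \emph{adaptively chosen dense seed} --- a pair of prefixes with average degree at least $r$ whose lengths are as small as possible given that they must already contain their mutual heavy vertices --- and then alternately enlarge $W$ to swallow all currently $U$-heavy vertices and enlarge $U$ to swallow all currently $W$-heavy vertices. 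Prefix lengths only increase and the ground sets are finite, so the process stabilises at a pair $(U,W)$ satisfying (b) automatically. It then remains to verify (a) at the fixpoint: condition (b) guarantees that all but at most $2^9 r\cdot|X\setminus U|$ of the edges incident to $W$ lie inside $U\times W$, and likewise from the other side; combining these two bounds with the degree lower bounds $|E(X,W)| \ge 2^9 r k\cdot|W|$ and $|E(U,Y)| \ge 2^9 r k\cdot|U|$ yields $|E(U,W)| \ge \tfrac{r}{2}(|U|+|W|)$ with a large multiplicative margin. The constant $2^9$ is precisely what simultaneously pays for the $2^9 r$ drop in the recursion, the heaviness thresholds, and the slack in these counting estimates.

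The step I expect to be the main obstacle is the quantitative control of this closure process: one has to guarantee at the same time that (i) the fixpoint remains strictly inside $(X,Y)$, so that $G'$ is nonempty and the recursion can proceed --- i.e.\ heaviness must not cascade over the whole graph --- and (ii) the corner rectangle is dense enough, which requires the seed, and hence $U$ and $W$, not to be too small. These two demands push $|U|+|W|$ in opposite directions, and reconciling them while keeping every estimate inside the factor $2^9$ is where all of the care goes; in particular the seed cannot be fixed in advance (a graph may have its edges concentrated away from the corner) and must be chosen as a genuinely dense prefix pair. An alternative route for this step, which I would fall back on if the closure bookkeeping becomes unwieldy, is to fix an edge-balanced partition of $X$ into $\Theta(k)$ consecutive blocks, do the same for $Y$, and extract $k$ disjoint dense rectangles at once from the resulting grid by a K\H{o}nig/Hall argument on its ``dense-cell'' incidence structure; there the same obstacle resurfaces as the need to choose the block sizes (balanced by number of incident edges, with a separate treatment of the few high-degree vertices) so that no vertex cover of size below $k$ can exist, which is again forced by the minimum-degree hypothesis.
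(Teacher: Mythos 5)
The paper does not prove this lemma itself; it imports it verbatim from the cited reference, so I am judging your argument on its own merits. Your primary route --- peeling off a ``corner'' $U\times W$ with $U$ a \emph{prefix} of $X$ and $W$ a \emph{prefix} of $Y$ --- cannot work, because the lemma never requires $U_i$ and $W_i$ to sit in corresponding positions of the two orders: the dense rectangles may have to pair an initial segment of $X$ with a \emph{terminal} segment of $Y$. Concretely, take $a=b=n$ with $x_i\sim y_j$ iff $|i+j-(n+1)|\le D$ for $D=2^8rk$ (an anti-diagonal band; every degree is about $2D+1\ge 2^9rk$), with $n$ much larger than $rk^2$. For any \emph{proper} prefixes $U=\{x_1,\dots,x_p\}$, $W=\{y_1,\dots,y_q\}$, the vertex $x_n\notin U$ has neighbourhood $\{y_1,\dots,y_{D+1}\}$, an initial segment of $Y$ of length exceeding $2^9r$ (since $k\ge 2$), so your condition (b) forces $q\le 2^9r$; symmetrically $p\le 2^9r$. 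But then $p+q< n-D$ and $G[U\cup W]$ has no edges at all, so (a) fails. Equivalently, your closure process started from any seed containing an edge cascades to $(X,Y)$: a prefix pair with an edge has $p+q\ge n-D$, which makes $x_n$ or $y_n$ heavy and swallows everything. So in this graph there is no admissible corner whatsoever, even though the lemma's conclusion holds easily by pairing the $i$-th block of $X$ with the $(k+1-i)$-th block of $Y$. The ``large multiplicative margin'' you claim for (a) at the fixpoint is also not implied by (b) alone --- combining your two edge counts only gives $|E(U,W)|\ge r(|U|+|W|)/2$ when $|U|+|W|\gtrsim (a+b)/(k+1)$, an extra largeness condition that, as the example shows, can be incompatible with (b).

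The correct idea is the one you relegate to a fallback: partition $X$ and $Y$ each into $\Theta(k)$ consecutive blocks balanced by incident edge count (after handling exceptional high-degree vertices), consider the resulting grid of cells, and extract $k$ pairwise row- and column-disjoint dense cells by a covering/greedy argument, using the minimum-degree hypothesis to show that deleting fewer than $k$ rows and columns cannot destroy all dense cells. This is essentially how the cited Lemma~11 of \cite{DBLP:journals/corr/abs-1907-02494} is proved, and it is where the permutation-like freedom in matching $U_i$ to $W_i$ enters. As written, however, that fallback is only a one-sentence sketch, and the route you actually develop is refuted by the example above, so the proposal has a genuine gap.
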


Now we are ready to prove~\cref{lem:bowtie}.

\begin{proof}[Proof of~\cref{lem:bowtie}]
First, we invoke~\cref{lem:partition} for $k=d$, $r=5$, and the graph $I(\LL_1,\LL_2)$, where the order of paths in each linkage is naturally determined by the ordering of their appearance in the walks $W_1$, $W_2$.
We obtain a partition of $\LL_1^1,\ldots,\LL_1^d$ of $\LL_1$ and a partition $\LL_2^1,\ldots,\LL_2^d$ of $\LL_2$, satisfying the conditions given in the lemma.

Fix some $i \in [d]$.
Recall that $I(\LL^i_1 \cup \LL_2^i)$ is of average degree at least 5, there are $\widetilde{\LL}^i_1 \subseteq \LL^i_1$ and $\widetilde{\LL}^i_2 \subseteq \LL^i_2$, so that the graph $I(\widetilde{\LL}^i_1 \cup \widetilde{\LL}^i_2)$ is of minimum degree at least 3.
Indeed, after a vertex of degree at most 2 is removed, the average degree is still at least 5.

Let $P_1,P_2,\ldots,P_z$ be the paths of $\widetilde{\LL}^i_1$, ordered by their appearance on $W_1$.
Let $R_1,R_2,\ldots,R_{z-1}$ be the walks, such that $R_i$ is the subwalk of $W_1$ between $P_i$ and $P_{i+1}$. Note that $R_i$ might be either a single thread of $(W_1,\LL_1)$, or a subwalk consisting of alternating threads and paths of $\LL^i$ that were not included in $\widetilde{\LL}^i$. 

Similarly we define $P'_1,P'_2,\ldots,P'_{z'}$ as the paths of $\widetilde{\LL}^i_2$ and $R'_1,R'_2,\ldots,R'_{z'-1}$ as the corresponding subwalks of $W_2$.
Finally, let us denote the concatenations of subwalks defined above:
\begin{align*}
W_1^i := & P_1,R_1,P_2,R_2,\ldots,R_{z-1},P_z\\
W_2^i := & P'_1,R'_1,P'_2,R'_2,\ldots,R'_{z'-1},P'_{z'}.
\end{align*}
Since $P_z$ is of degree at least 3 in $I(\widetilde{\LL}_1^i,\widetilde{\LL}_2^i)$, there exists a common vertex $v$ of $P_z$ and some $P'_{q'}$ for $q'<z'-1$.
Symmetrically, there is a common vertex $w$ of $P'_{z'}$ and $P_q$ for some $q<z-1$.
Consequently, we construct a closed walk, see also~\cref{fig:walkZi}: 
\[Z_i := v,\text{the rest of } P'_{q'},R'_{q'},P'_{q'+1},\ldots,w,\text{the rest of }P_{q},R_{q},P_{q+1},\ldots,v.\]
Note that $Z_i$ fully contains at least one path from each linkage, i.e., $P_{q+1} \in \LL_1$ and $P'_{q'+1} \in \LL_2$.

\begin{figure}
\includegraphics[scale=1,page=2]{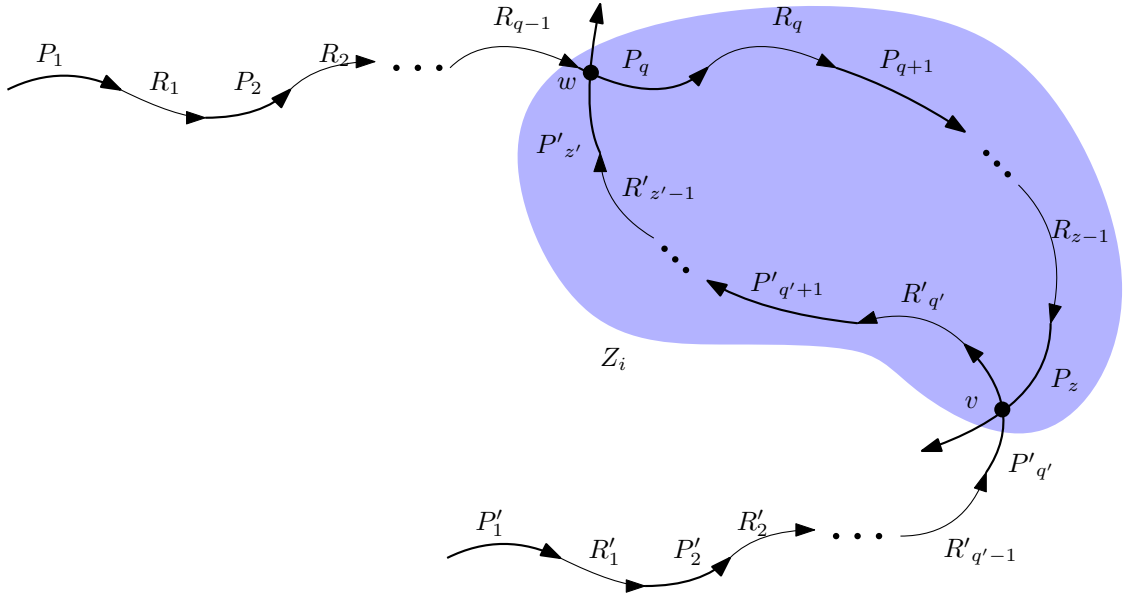}
\caption{The walk $Z_i$ constructed in the proof of~\cref{lem:bowtie}.}\label{fig:walkZi}
\end{figure}

To see that the family $\mathcal{Z} = \{ Z_i \mid i \in [d]\}$ satisfies the statement of the lemma, it remains to discuss the congestion.
By condition 2 of~\cref{lem:partition} each vertex can appear at most $\alpha$ times on $W_1$, thus is contributes as a part of at most $\alpha$ walks $W^i_1$.
Similarly, each vertex contributes as a part of at most $\beta$ walks $W^i_2$. Summing up, each vertex may appear in at most $\alpha + \beta$ elements of $\mathcal{Z}$.

Now, suppose that one of the input threaded linkages, say, $(W_1,\LL_1)$, is untangled.
Let us enumerate $\LL_1 = \{L_1,L_2,\ldots,L_\ell\}$ and let $Q_1,Q_2,\ldots,Q_{\ell-1}$ be the threads of $(W_1,\LL_1)$ as in~\cref{def:threaded}.
Recall that the paths from $\LL_1$ are vertex-disjoint. Now consider a thread $Q_j$.
Note at most one walks $W_1^i$ might contain $Q_j$ as a subwalk.
Furthermore, if $Q_j$ is a subwalk of $W_1^i$, then so are $L_j$ and $L_{j+1}$.
Thus, since $(W_1,\LL_1$) is untangled, each vertex from $W_1$ might contribute to the congestion of at most one walk in $Z \in \Zz$.
\end{proof}

\section{Main proof}\label{sec:main}
Using the tools from \cref{sec:tools} we now prove \cref{thm:main}.
As mentioned before, the starting point is a path system (see \cref{def:path-system}) containing a large number of sets $A_i, B_i$, $i \in [a]$, and large linkages $\LL_{i, j}$ between pairs of such sets.
The basic goal is to exploit the interplay between the dense and sparse winning scenarios from \cref{sec:brambles}.
The dense winning scenario (\cref{lem:dense}) is applicable if there is a pair of linkages whose intersection graph has large degeneracy.
The crux is how to apply the sparse winning scenarios (\cref{lem:sparse1,lem:sparse2}) because they need a fraction of pairs of linkages that is slightly larger than half of the available pairs of linkages.
Our goal is thus to distinguish three subsets of the set of pairs of linkages such that (i) one of the subsets will be larger than half the available pairs of linkages and (ii) each of the subsets can be used to define a bramble of large size.
We obtain these three distinguished subsets by using two partially overlaying matchings in auxiliary graphs whose vertex sets are the available pairs of linkages and whose edges represent the degeneracy of the corresponding intersection graphs.

Some complications arise from the aim of keeping the congestion low.
To achieve this, instead of applying the sparse winning scenarios directly to the linkages from the path set system, we use first the tools from \cref{sec:threaded} to obtain subsets of pairs of linkages of lower congestion.

\paragraph{Setup.}
Let \(k \in \mathbb{N}\) with \(k > 1\) and let $G$ be a graph of directed treewidth $t$.
We show that if $t \geq c_t \cdot k^{48} \log^{13} k$, then $G$ contains a bramble of congestion at most~8 and size~$k$.
Herein, $c_t$ is a constant that we specify below.

We start by fixing the following parameters. Let $c$ be the constant in \cref{lem:sparse2}. Without loss of generality we can assume that $c \leq \frac{1}{3^{1/4}}$ and thus $c^{-4} \geq 3$; this will be used in the case analysis later in the proof.
We put $c_a = c^{-4} \geq 3$.
Let $c_{d_3} := c_{\textsf{KT}}$ be the constant from \cref{thm:kostochka}. We define:
\begin{align*}
  a &= \left\lceil c_a \cdot k^2 (1 + \log k)^{1/2} \right\rceil, \\
  d_3 &= \left\lceil c_{d_3} \cdot k \sqrt{\log k} \right\rceil, \\
  d_2 &=  \left\lceil 2^{11}5e \cdot a^2 d_3 \right\rceil = \Oh(k^5 \log^{3/2} k), \\
  d_1 &=  \left\lceil 2^{11}5e \cdot a^2 d_2 \right\rceil = \Oh(k^9 \log^{5/2} k)\text{, and} \\
  b & = \left\lceil 4e \cdot a^2 d_1^2 \right\rceil = \Oh(k^{22} \log^{6} k). 
\end{align*}
Let $c_{\textsf{KK}}$ be the constant in \cref{lem:KK}.
Choose the constant~$c_t$ such that $t \geq c_{\textsf{KK}} a^2 b^2$.
To see that this is possible, observe that
\begin{align*}
  c_{\textsf{KK}} a^2 b^2 \leq & \;  2^5 e^2 \cdot c_{\textsf{KK}} \cdot a^6 d_1^4 \\
  \leq & \; 2^{50} 5^4 e^6 \cdot c_{\textsf{KK}} \cdot a^{14} d_2^4\\
  \leq & \; 2^{95} 5^{8} e^{10} \cdot c_{\textsf{KK}} \cdot a^{22}d_3^4\\
  \leq & \; 2^{96} 5^{8} e^{10} \cdot c_{\textsf{KK}} \cdot c_{d_3}^4 \cdot a^{22}k^4\log^2 k \\
  \leq & \; 2^{97} 5^{8} e^{10} \cdot c_{\textsf{KK}}  \cdot c_{d_3}^4 \cdot c_a^{22} \cdot k^{48} \log^2 k (1+\log k)^{11}\\
  \leq & \; 2^{97} 5^{8} e^{10} \cdot c_{\textsf{KK}} \cdot c_{d_3}^4 \cdot c_a^{22} \cdot k^{48} \log^2 k (2\log k)^{11}\\
  \leq & \; 2^{108} 5^{8} e^{10} \cdot c_{\textsf{KK}} \cdot c_{d_3}^4 \cdot c_a^{22} \cdot k^{48} \log^{13} k.
\end{align*}
Thus, putting $c_t = 2^{108} 5^{8} e^{10} \cdot c_{\textsf{KK}} \cdot c_{d_3}^4c_a^{22}$, we have $t \geq c_{\textsf{KK}} a^2b^2$.
Hence, by \cref{lem:KK}, there is an $(a,b)$-path system $(P_i,A_i,B_i)_{i=1}^a$ in~$G$.

\paragraph{Large subsets of pairs of linkages and sets of closed walks of low congestion.}
Let $V = \{(i,j) \mid i, j \in [a] \wedge i \neq j\}$.
Our aim is now to find the three subsets of $V$ such that one of them will be larger than $|V|/2$ mentioned above and to subset some linkages in order to achieve low congestion.
We achieve the second aim by using sets of closed walks derived from some of the linkages.
See \cref{tb:families} for the various linkages and sets of walks that we define below and their properties.

\begin{figure}[tb]
\begin{center}
\begin{tabular}[t]{|l|l|c|c|l|}
\hline \textbf{Index} & \textbf{Family} & \textbf{o/c} & \textbf{Size} & \textbf{Comments} \\\hline
$(i,j) \in V$ & $(W_{i,j}, \LL_{i,j})$ & $\mathrm{o} \leq 3$ & $b$ & threaded linkage \\\hline
$(i,j) \in Z$ & $\mathcal{Z}_{i,j}$ & $\mathrm{o} \leq 3$ & $d_1/(2^9 \cdot 5)$ & \makecell[l]{closed walks \\ each $W \in \mathcal{Z}_{i,j}$ contains $P_{i,j}(W) \in \LL_{i,j}$} \\\hline
$(i,j) \in V \setminus Z$ & $(W_{i,j}', \LL_{i,j}')$ & $\mathrm{o} \leq 3$ & $4ea^2 d_1 + 1$ & untangled threaded linkage \\\hline
$e \in M_1$ & $\mathcal{Z}_e$ & $\mathrm{c} \leq 2$ & $d_1/(2^9 \cdot 5)$ & \makecell[l]{closed walks, for $(i,j) \in e$ \\ each $W \in \mathcal{Z}_e$ contains $P_{i,j}(W) \in \LL_{i,j}$}
  \\\hline
$e \in M_2$ & $\mathcal{Z}_e$ & $\mathrm{c} \leq 4$ & $d_2/(2^9 \cdot 5)$ & \makecell[l]{closed walks, for $(i,j) \in e$ \\ each $W \in \mathcal{Z}_e$ contains $P_{i,j}(W) \in \LL_{i,j}$} \\\hline
$\begin{array}{l} F \subseteq Z \cup M_1 \cup M_2 \\ 1 \leq |F| \leq 2 \end{array}$ & $\mathcal{Z}_F$ & $\mathrm{c} \leq 8$ & $d_2/(2^9 \cdot 5)$ & intersection graph $d_3$-degenerate \\\hline
$(i,j) \in Z$ & $\LL_{i,j}^\mathcal{Z}$ & $\mathrm{c} \leq 1$ & $d_1/(2^9 \cdot 5)$ & \makecell[l]{linkage \\ $\LL_{i,j}^\mathcal{Z} = \{ P_{i,j}(W)~|~W \in \mathcal{Z}_{i,j}\}$}\\\hline
$(i,j) \in V(M_1)$ & $\LL_{i,j}^\mathcal{Z}$ & $\mathrm{c} \leq 1$ & $d_1/(2^9 \cdot 5)$ & \makecell[l]{linkage, for $e \in M_1$ with $(i,j) \in e$ \\ $\LL_{i,j}^\mathcal{Z} = \{ P_{i,j}(W)~|~W \in \mathcal{Z}_e\}$} \\\hline
$\begin{array}{l}
  (i,j) \in V(M_2) \\
  \quad\setminus (Z \cup V(M_1)) \end{array} $
& $\LL_{i,j}^\mathcal{Z}$ & $\mathrm{c} \leq 1$ & $d_2/(2^9 \cdot 5)$ & \makecell[l]{linkage, for $e \in M_2$ with $(i,j) \in e$ \\ $\LL_{i,j}^\mathcal{Z} = \{ P_{i,j}(W)~|~W \in \mathcal{Z}_e\}$} \\\hline
\end{tabular}
\caption{Important linkages and families of closed walks. ``o'' stands for overlap and ``c'' stands for congestion.}\label{tb:families}
\end{center}
\end{figure}

We apply \cref{lem:Wij} to $(P_i,A_i,B_i)_{i=1}^a$, obtaining for every $(i,j) \in V$ a threaded linkage~$(W_{i, j}, \LL_{i, j})$ of size $b$ and overlap at most 3.
Then, we apply~\cref{lem:Wij2} to $(W_{i, j}, \LL_{i, j})$ with $x = 4ea^2d_1 + 1$ and $d = d_1/(2^9 \cdot 5)$.
Note that $xd  + (d - 1) 
\leq 4e \cdot a^2d_1^2/(2^9 \cdot 5) + d_1/(2^8 \cdot 5) \leq 4e \cdot a^2d_1^2 \leq b$.
Hence, the preconditions of \cref{lem:Wij2} are satisfied.
Let $Z \subseteq V$ be the set of pairs $(i,j)$ for which the application of~\cref{lem:Wij2} results in the first outcome.

For each $(i, j) \in V \setminus Z$: Let $(W_{i,j}',\LL_{i,j}')$ be the \emph{untangled} threaded linkage resulting from the application of~\cref{lem:Wij2}.
Note that $|\LL'_{i, j}| \geq x = 4e \cdot a^2d_1 + 1$.

For each $(i, j) \in Z$, that is, for each $(i, j)$ where applying~\cref{lem:Wij2} results in the first outcome: Let $\mathcal{Z}_{i,j}$ be the family of closed walks resulting from the application of~\cref{lem:Wij2}.
Observe that $\mathcal{Z}_{i,j}$ is of size at least $d_1/(2^9 \cdot 5)$ and of overlap at most~$3$.
By the definition of $\mathcal{Z}_{i,j}$, for each walk $W \in \mathcal{Z}_{i,j}$ there is a distinct path $P(W) \in  \LL_{i, j}$ such that $P(W)$ is a subwalk of~$W$.
Define the linkage $\LL_{i,j}' := \{P(W) \mid W \in \mathcal{Z}_{i,j}\}$.
For convenience, we denote also $W_{i,j}' := W_{i,j}$.
Observe that $(W_{i,j}', \LL_{i,j}')$ is a threaded linkage (but not necessarily untangled).

For both $\ell = 1,2$, let $E_\ell \subseteq \binom{V}{2}$ be the set of those pairs $\{(i,j), (i',j')\} \in \binom{V}{2}$, for which the intersection graph of $\LL_{i,j}'$ and $\LL_{i',j'}'$ is not $d_\ell$-degenerate.
Define an undirected graph $H_\ell = (V,E_\ell)$.
Since $d_1 \geq d_2$, we have $E_1 \subseteq E_2$, and thus $H_1$ is a subgraph of $H_2$.

Let $M_1$ be a maximum matching in $H_1 - Z$.
Let $M_2$ be a maximum matching in the graph $(V, E(H_2) \setminus \binom{V(M_1) \cup Z}{2})$, that is,
in the graph that results from $H_2$ by removing all edges with both endpoints in $V(M_1) \cup Z$, see~\cref{fig:zm1m2}.

\begin{figure}[t]
  \centering
  \includegraphics{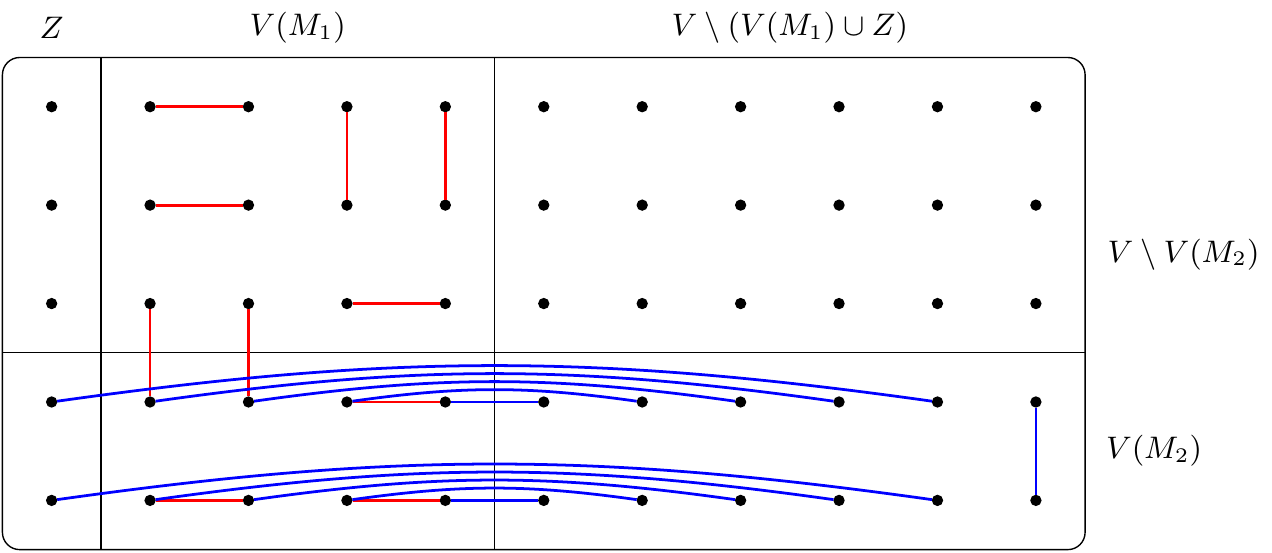}
  \caption{The relation of $V$, $Z$, and the matchings $M_1$ (red) and $M_2$ (blue).}\label{fig:zm1m2}
\end{figure}
\begin{figure}[t]
  \centering
  \includegraphics{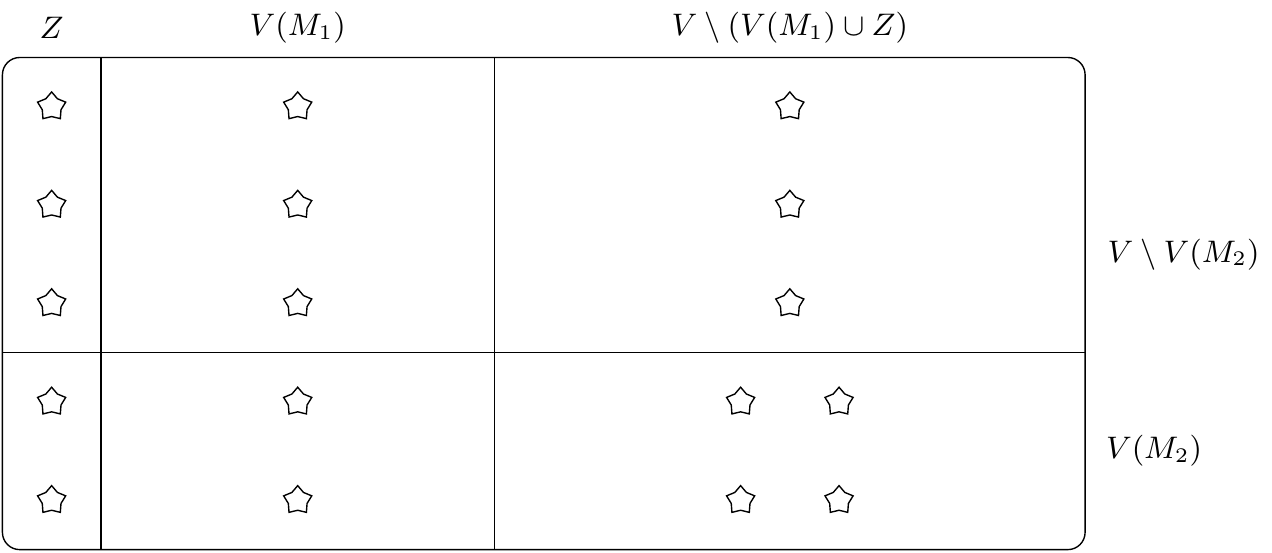}
  \caption{Proof of~\cref{lem:cases}: the number of stars in each region corresponds to the number of times the set is counted in the proof of Claim~\ref{lem:cases}.}
  \label{fig:cases}
\end{figure}

We are now ready to define the promised three vertex subsets of $V$ such that one of them is guaranteed to be sufficiently large for our purposes.

\begin{claim}\label{lem:cases}
  At least one of the following cases occurs:
  \begin{enumerate}[{Case} 1.]
  \item\label{p:sparse} $|V \setminus (V(M_1) \cup Z)| \geq 0.6 |V|$;
  \item\label{p:dense} $|V(M_1) \cup V(M_2) \cup Z| \geq 0.6 |V|$;
  \item\label{p:mixed} $|V \setminus V(M_2)| \geq 0.6 |V|$.
  \end{enumerate}
\end{claim}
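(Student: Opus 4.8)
The plan is to argue by contradiction: assume all three cases fail, i.e. $|V \setminus (V(M_1) \cup Z)| < 0.6|V|$, $|V(M_1) \cup V(M_2) \cup Z| < 0.6|V|$, and $|V \setminus V(M_2)| < 0.6|V|$. Rewriting the first and third in complementary form, this says $|V(M_1) \cup Z| > 0.4|V|$ and $|V(M_2)| > 0.4|V|$, while $|V(M_1) \cup V(M_2) \cup Z| < 0.6|V|$. The key structural fact to exploit is how the three sets $Z$, $V(M_1)$, $V(M_2)$ overlap: by construction $M_1$ is a matching in $H_1 - Z$, so $V(M_1)$ is disjoint from $Z$; and $M_2$ is a matching chosen in the graph obtained from $H_2$ by deleting all edges inside $V(M_1) \cup Z$, so every edge of $M_2$ has at least one endpoint outside $V(M_1) \cup Z$. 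This means each edge $e \in M_2$ contributes at least one "fresh" vertex not in $V(M_1) \cup Z$, so $|V(M_2) \setminus (V(M_1) \cup Z)| \geq |M_2| = \tfrac12 |V(M_2)|$.

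The main computation then proceeds by a counting / inclusion–exclusion argument on $|V(M_1) \cup V(M_2) \cup Z|$, following the star-counting picture in \cref{fig:cases}. I would split $V(M_1) \cup V(M_2) \cup Z$ into the part inside $V(M_1) \cup Z$ and the part of $V(M_2)$ outside it:
\[
  |V(M_1) \cup V(M_2) \cup Z| \;\geq\; |V(M_1) \cup Z| + |V(M_2) \setminus (V(M_1) \cup Z)| \;\geq\; |V(M_1) \cup Z| + \tfrac12 |V(M_2)|.
\]
Under the negated hypotheses this is at least $0.4|V| + \tfrac12 \cdot 0.4|V| = 0.6|V|$, contradicting $|V(M_1) \cup V(M_2) \cup Z| < 0.6|V|$. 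Hence one of the three cases must hold. (One should double-check the boundary: the inequalities coming from negating Cases~1 and~3 are strict, so the final bound is actually $> 0.6|V|$, comfortably contradicting the strict upper bound; and the "$\geq \tfrac12|V(M_2)|$" step uses that $M_2$ is a matching, so $|V(M_2)| = 2|M_2|$, together with the observation above that each of the $|M_2|$ edges has a private endpoint outside $V(M_1)\cup Z$.)

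The part that needs the most care is making the overlap bookkeeping precise — in particular verifying that the edges of $M_2$ genuinely each contribute a distinct new vertex outside $V(M_1) \cup Z$ (this is exactly where the definition of $M_2$ as a matching \emph{avoiding} edges inside $V(M_1) \cup Z$ is used, rather than, say, a matching in all of $H_2$), and being careful that the three "bad" events are stated as the negations of the three cases with the correct direction of inequality. Everything else is elementary arithmetic with the fractions $0.4$ and $0.6$, which is why I would not expect any genuine obstacle here beyond careful set-theoretic accounting; the figure \cref{fig:cases} is presumably there precisely to guide this accounting.
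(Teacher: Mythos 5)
Your proof is correct and rests on exactly the same key structural observation as the paper's: every edge of $M_2$ has at most one endpoint in $V(M_1) \cup Z$, hence $|V(M_2) \setminus (V(M_1) \cup Z)| \geq |M_2| = \tfrac12|V(M_2)|$. The paper packages this into the weighted inequality $2|V \setminus (V(M_1) \cup Z)| + 2|V(M_1) \cup V(M_2) \cup Z| + |V \setminus V(M_2)| \geq 3|V|$, proved by partitioning $V$ into five disjoint regions and verifying each vertex is counted at least three times (which is what~\cref{fig:cases} depicts); you instead argue by contradiction, turning the negated Cases~1 and~3 into the lower bounds $|V(M_1) \cup Z| > 0.4|V|$ and $|V(M_2)| > 0.4|V|$ and then showing $|V(M_1) \cup V(M_2) \cup Z| \geq |V(M_1) \cup Z| + \tfrac12|V(M_2)| > 0.6|V|$. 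Both routes are elementary and logically equivalent; yours is slightly more streamlined since it avoids the five-region case split, while the paper's direct weighted-counting form makes the ``one of three averages must be large'' structure visible at a glance without invoking contradiction. One small point of care you correctly flag: the chosen private endpoints of the $M_2$-edges outside $V(M_1) \cup Z$ are pairwise distinct precisely because $M_2$ is a matching, which is what justifies $|V(M_2) \setminus (V(M_1) \cup Z)| \geq |M_2|$.
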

\begin{proof}
  It suffices to show that
  \begin{align}
    2|V \setminus (V(M_1) \cup Z)| + 2|V(M_1) \cup V(M_2) \cup Z| + |V \setminus V(M_2)|  \geq 3|V|.\label{eq:cases1}
  \end{align}
  Consider how often vertices in the following vertex subsets are counted in the left hand side of~\eqref{eq:cases1}; consult also~\cref{fig:cases}:
  \begin{itemize}
  \item Each vertex in $V \setminus (V(M_1) \cup V(M_2) \cup Z)$ is counted thrice.
  \item Each vertex in $V(M_2) \setminus (V(M_1) \cup Z)$ is counted four times.
  \item Each vertex in $V(M_2) \cap (V(M_1) \cup Z)$ is counted twice.
  \item Each vertex in $V(M_1) \setminus V(M_2)$ is counted thrice.
  \item Each vertex in $Z \setminus V(M_2)$ is counted thrice.
  \end{itemize}
  Now note that no vertex of $V$ occurs in two or more of the above sets.
  Hence, the left hand side of~\eqref{eq:cases1} is at least
  \begin{multline*}
    3|V \setminus (V(M_1) \cup V(M_2) \cup Z)| + 4 | V(M_2) \setminus (V(M_1) \cup Z)| \\
    + 2|V(M_2) \cap (V(M_1) \cup Z)| + 3|V(M_1) \setminus V(M_2)| + 3|Z \setminus V(M_2)|.
  \end{multline*}
  Observe that $|(V(M_1) \cup Z) \cap V(M_2)| \leq |V(M_2) \setminus (V(M_1) \cup Z)|$, because every edge of $M_2$ has at most one endpoint in $V(M_1) \cup Z$.
  Thus, the left hand side of~\eqref{eq:cases1} is at least 
  \begin{multline*}
    3|V \setminus (V(M_1) \cup V(M_2) \cup Z)| + 3 | V(M_2) \setminus (V(M_1) \cup Z)| \\
    + 3 |V(M_2) \cap (V(M_1) \cup Z)| + 3|V(M_1) \setminus V(M_2)| + 3|Z \setminus V(M_2)|,
  \end{multline*}
  which is equal to $3|V|$ (recall here that $V(M_1) \cap Z = \emptyset$), as claimed. 
\end{proof}

We now continue with definitions of sets of closed walks and sublinkages that we need to derive a bramble of low congestion for that subset above that is large.
Refer again to Figure~\ref{tb:families} for a summary of all important linkages and families of closed walks defined here.

For both $\ell = 1,2$ and for each $e = \{(i,j), (i',j')\} \in M_\ell$, apply~\cref{lem:bowtie} to $(W_{i,j}',\LL_{i,j}')$ and $(W_{i',j'}',\LL_{i',j'}')$ to find a family $\mathcal{Z}_e$ of $d_\ell/(2^9 \cdot 5)$ closed walks, such that every walk in $\mathcal{Z}_e$ contains at least one path from $\LL'_{i,j}$ and at least one path from $\LL'_{i',j'}$.
Since the matchings $M_1$ and $M_2$ are disjoint, we obtain $|M_1| + |M_2|$ families of closed walks $\mathcal{Z}_e$ for $e \in M_1 \cup M_2$.

Let us now analyze the congestion of the families $\mathcal{Z}_e$.
Recall that for all $(i,j) \notin Z$, the threaded linkage $(W_{i,j}',\LL_{i,j}')$ is untangled.
Thus for each $e \in M_1$, both its endpoints correspond to untangled linkages, so by~\cref{lem:bowtie},
the congestion of $\mathcal{Z}_e$ is at most two.
Now consider the family~$Z_e$ for an edge $e \in M_2$.
At least one endpoint~$(i, j)$ of $e$ is in $V \setminus Z$ and hence $(W_{i,j}',\LL_{i,j}')$ is untangled.
Furthermore, for the other endpoint $(i', j')$ of $e$ we have that $\LL_{i',j'}'$ is of congestion at most 3. 
So by~\cref{lem:bowtie}, the congestion of $\mathcal{Z}_e$ is at most $4$.

Now suppose that for some $F \subseteq M_1 \cup M_2 \cup Z$ of size $1$ or $2$, the intersection graph of $\mathcal{Z}_F := \bigcup_{g \in F} \mathcal{Z}_{g}$ is not $d_3$-degenerate.
Recall that for each $(i,j) \in Z$, the congestion of $\mathcal{Z}_{i,j}$ is at most~3 (since it resulted from the first outcome of \cref{lem:Wij2}), and for each $e \in M_1 \cup M_2$ the congestion of $\mathcal{Z}_{e}$ is at most~$4$.
Thus the congestion of $\mathcal{Z}_F$ is at most~$8$. 
Applying~\cref{lem:dense} to $\mathcal{Z}_F$ thus yields bramble of size~$k$ and congestion at most~8, finishing the proof in this case.
Thus, henceforth the following claim holds.
\begin{claim}\label{cl:zf-degen}
  For each $F \subseteq M_1 \cup M_2 \cup Z$ of size $1$ or $2$ the intersection graph of $\mathcal{Z}_F$ is $d_3$\nobreakdash-degenerate.
\end{claim}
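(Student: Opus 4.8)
The plan is to observe that Claim~\ref{cl:zf-degen} is not really a claim requiring a new argument but rather a consequence of a case distinction that we are free to make: either the hypothesis of \cref{lem:dense} is met for some family $\mathcal{Z}_F$ with $F \subseteq M_1 \cup M_2 \cup Z$ and $1 \le |F| \le 2$, in which case we are already done and the main proof terminates here, or else no such $F$ exists, which is exactly the statement of the claim. So the proof is simply: assume toward applying \cref{lem:dense} that some such $F$ has $\IntGraph{\mathcal{Z}_F}$ not $d_3$-degenerate; derive a bramble of size~$k$ and congestion at most~$8$; conclude that in the remaining (non-terminating) branch of the argument, every such $\mathcal{Z}_F$ is $d_3$-degenerate.

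First I would record the congestion bound on $\mathcal{Z}_F$. Each $\mathcal{Z}_{i,j}$ with $(i,j) \in Z$ has congestion at most~$3$: it comes from the first outcome of \cref{lem:Wij2} applied to the threaded linkage $(W_{i,j},\LL_{i,j})$, which has overlap at most~$3$ by \cref{lem:Wij}, and \cref{lem:Wij2} preserves the overlap in its first outcome (and congestion is at most overlap). Each $\mathcal{Z}_e$ with $e \in M_1 \cup M_2$ has congestion at most~$4$, as established in the paragraph preceding the claim (for $e \in M_1$ both endpoints are untangled, giving congestion~$2$; for $e \in M_2$ one endpoint is untangled and the other has overlap at most~$3$, so \cref{lem:bowtie} gives congestion at most~$4$). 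Since $\mathcal{Z}_F = \bigcup_{g \in F}\mathcal{Z}_g$ is a union of at most two such families, its congestion is at most $4 + 4 = 8$.

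Next, set $d_3 = \lceil c_{d_3} k \sqrt{\log k}\rceil$ with $c_{d_3} = c_{\textsf{KT}}$, so that $d_3 \ge c_{\textsf{KT}} \cdot k \cdot \sqrt{\log k}$. If $\IntGraph{\mathcal{Z}_F}$ is not $d_3$-degenerate, then it is not $c_{\textsf{KT}} \cdot k \cdot \sqrt{\log k}$-degenerate either, so \cref{lem:dense} applied to $\mathcal{Z}_F$ (a family of closed walks of congestion at most~$8$) yields a bramble of congestion at most~$8$ and size~$k$ in $G$. This is precisely the conclusion we want for \cref{thm:main}, so the main proof ends in this case. Therefore, in the only remaining case—which is what the rest of the argument treats—no such $F$ exists, i.e.\ for every $F \subseteq M_1 \cup M_2 \cup Z$ with $1 \le |F| \le 2$ the intersection graph of $\mathcal{Z}_F$ is $d_3$-degenerate.

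I do not expect any real obstacle here; the only thing to be careful about is the bookkeeping of congestions (making sure the ``at most $3$'' for $Z$-families and ``at most $4$'' for matching families have both been justified before this point, which they have) and the precise inequality $d_3 \ge c_{\textsf{KT}} k \sqrt{\log k}$ needed to feed \cref{thm:kostochka} through \cref{lem:dense}. The slightly subtle conceptual point—worth stating explicitly—is that the claim is phrased as an unconditional fact only because we have silently passed to the branch of the proof in which \cref{lem:dense} did not already finish the job; the proof of the claim is the verification that this branching is exhaustive.
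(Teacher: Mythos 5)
Your proposal is correct and matches the paper's own handling: the claim is stated as holding ``henceforth'' after the branch in which some $\mathcal{Z}_F$ fails to be $d_3$-degenerate is dispatched by applying \cref{lem:dense} (with congestion bound at most $8$ and $d_3 \geq c_{\textsf{KT}} k\sqrt{\log k}$) to finish \cref{thm:main}. Your congestion bookkeeping (at most $3$ for $\mathcal{Z}_{i,j}$ with $(i,j)\in Z$, at most $4$ for $\mathcal{Z}_e$ with $e\in M_1\cup M_2$, hence at most $8$ for a union of two) and the observation that the claim is really the residue of an exhaustive case split are exactly what the paper does.
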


For every $e \in M_1$ and endpoint $(i,j) \in e$ proceed as follows.
Recall that every $W \in \mathcal{Z}_e$ contains a path in $\LL_{i,j}'$.
For every $W \in \mathcal{Z}_e$ let $P_{i,j}(W) \in \LL_{i,j}'$ be an arbitrary such path.
Let $\LL_{i,j}^\mathcal{Z} = \{P_{i,j}(W) \mid W \in \mathcal{Z}_e\}$.
Note that $|\LL_{i,j}^\mathcal{Z}| \geq |\mathcal{Z}_e| \geq d_1/(2^9 \cdot 5)$.

Similarly, for every $e \in M_2$ and endpoint $(i,j) \in e \setminus (V(M_1) \cup Z)$, proceed as follows.
For every $W \in \mathcal{Z}_e$, pick a path $P_{i,j}(W) \in \LL_{i,j}'$ on $W$.
Let $\LL_{i,j}^\mathcal{Z} = \{P_{i,j}(W) \mid W \in \mathcal{Z}_e\}$.
Note that $|\LL_{i,j}^\mathcal{Z}| \geq |\mathcal{Z}_e| \geq d_2/(2^9 \cdot 5)$.

Furthermore, for every $(i,j) \in Z$ set $\LL_{i,j}^\mathcal{Z} = \LL_{i,j}'$.
Note that $|\LL_{i,j}^\mathcal{Z}| \geq |\mathcal{Z}_{i, j}| \geq d_1/(2^9 \cdot 5)$.
Recall that the application of~\cref{lem:Wij2} for $(W_{i, j}, \LL_{i, j})$ in the beginning resulted in the first outcome and thus for every $W \in \mathcal{Z}_{i,j}$ there is a distinct path $P(W) \in \LL_{i, j} \subseteq \LL_{i, j}' = \LL_{i,j}^\mathcal{Z}$ such that $P(W)$ is a subwalk of~$W$.
For every $W \in \mathcal{Z}_{i,j}$ denote $P_{i,j}(W) = P(W) \in \LL_{i,j}'$.

\paragraph{Case distinction.}\label{sec:cases}
We are now ready to deal with the possible outcomes of \cref{lem:cases} one-by-one.
Refer to Figure~\ref{tb:families} to recall the properties of the linkages and families of closed walks used below.

\paragraph{Case~\ref{p:sparse}: \boldmath $|V \setminus (V(M_1) \cup Z)| \geq 0.6 |V|$  (large independent set in $H_1$).}
We would like to apply \cref{lem:sparse2} to the path system $(P_i, A_i, B_i)_{i = 1}^a$ with $\mathcal{I} := V \setminus (V(M_1) \cup Z)$.
To apply \cref{lem:sparse2} we check that (i)~$|\mathcal{I}| \geq 0.6 \cdot a \cdot (a - 1)$, which is true since $|\mathcal{I}| \geq 0.6 |V| = 0.6 \cdot a \cdot (a - 1)$, that (ii) for every $(i, j) \in \mathcal{I}$ there is a linkage of size at least $4e \cdot a^2d_1 + 1$ between points in $A_i$ and $B_j$, that (iii) for every two $(i, j), (i', j') \in \mathcal{I}$ the intersection graph of the two linkages is $d_1$-degenerate and that (iv) the size of the linkages is strictly larger than $4 e \cdot a^2 d_1$, which clearly holds by definition.

For the linkages in point (ii) we choose the linkages $\LL'_{i, j}$.
Observe that, since each $(i, j) \in \mathcal{I}$ is not in $Z$, we have $|\LL'_{i, j}| > 4e \cdot a^2d_1$, as required by point~(ii).
Since $M_1$ is a maximum matching in $H_1-Z$ it follows that $\mathcal{I}$ is an independent set in~$H_1$.
By the definition of~$H_1$ and $M_1$, for every two distinct pairs $(i,j), (i',j') \in \mathcal{I}$, the intersection graph of $\LL_{i,j}'$ and $\LL_{i',j'}'$ is thus $d_1$-degenerate.
Thus, point~(iii) holds as well, meaning that \cref{lem:sparse2} is applicable.

Recall that $c$ is the constant in \cref{lem:sparse2} and we have $c_a = c^{-4} \geq 3$.
The application of~\cref{lem:sparse2} yields a bramble of congestion at most $4$ and size at least
\begin{align}
  c \cdot \frac{a^{1/2}}{\log^{1/4} a} & \geq c \cdot \frac{(c_a \cdot k^2(1 + \log k)^{1/2})^{1/2}}{\log^{1/4} (c_a k^2(4 + \log k)^{1/2})} 
                                         = k \cdot \frac{cc_a^{1/2} (1 + \log k)^{1/4}}{\log^{1/4} (c_a k^2(4 + \log k)^{1/2})} \nonumber \\
                                       & \geq k \cdot \frac{cc_a^{1/2} (1 + \log k)^{1/4}}{\log^{1/4} (2c_a k^3)}
                                         = k \cdot \frac{cc_a^{1/2} (1 + \log k)^{1/4}}{(\log 2c_a + 3\log k)^{1/4}} \nonumber \\
                                       & \geq k \cdot \frac{c c_a^{1/4} \cdot (c_a + c_a \log k)^{1/4}}{(\log 2c_a + 3\log k)^{1/4}}. \label[equation]{eq:bramble-size-from-lem:sparse2}
\end{align}
As $c_a \geq 3$, we have that $c_a \geq \log 2c_a$ and thus the right-hand side of inequality~\eqref{eq:bramble-size-from-lem:sparse2} is at least~$k$, finishing the proof in this case.

\paragraph{Case~\ref{p:dense}: \boldmath $|V(M_1) \cup V(M_2) \cup Z| \geq 0.6 |V|$ (large matchings).}
We show that we may apply \cref{lem:sparse1} to a set of paths obtained from the families $\mathcal{Z}_g$ for $g \in Z \cup M_1 \cup M_2$.
Let $J$ be the $(|Z|+|M_1|+|M_2|)$-partite graph whose vertex set is the disjoint union of the \emph{color classes} $\{\mathcal{Z}_{g} \mid g \in Z \cup M_1 \cup M_2\}$ such that, for every pair of distinct $g,g' \in Z \cup M_1 \cup M_2$, the graph $J[\mathcal{Z}_g \cup \mathcal{Z}_{g'}]$ is the intersection graph of $\mathcal{Z}_g$ and $\mathcal{Z}_{g'}$ (and $J$ contains no further edges).
Hence we obtain a graph colored with $|Z| + |M_1| + |M_2| \leq |V| = a(a-1)$ colors.
We claim that by \cref{lem:LLL} we may thus select a walk $W_h \in \mathcal{Z}_h$ for every $h \in M_1 \cup M_2 \cup Z$ such that the walks $W_h$ are pairwise vertex-disjoint.
To that end, it suffices to show the required relation on the number of colors of $J$, the size of the color classes, and the degeneracy of subgraphs induced by two color classes.
By \cref{cl:zf-degen} each subgraph of $J$ that is induced by two different colors is $d_3$-degenerate.
Moreover, for every $g \in Z \cup M_1 \cup M_2$ we have $|\mathcal{Z}_g| \geq d_2/(2^9 \cdot 5)$ by definition of $\mathcal{Z}_g$ and since $d_1 > d_2$.
By definition of~$d_2$, we have $d_2/(2^9 \cdot 5) \geq 4e \cdot a^2d_3 \geq 4e(a(a-1) - 1)d_3$, as required by \cref{lem:LLL}.
Thus, indeed, we may choose the walks~$W_h$ as specified.

Let $\mathcal{I}$ be a subset of $Z \cup V(M_1) \cup V(M_2)$ of size exactly $\lceil 0.6 a(a-1) \rceil$.
The set of paths to which we apply \cref{lem:sparse1} is derived from the walks~$W_h$ as follows.
If $h = \{(i,j), (i',j')\} \in M_1 \cup M_2$, then, by definition of $\mathcal{Z}_h$, walk $W_h$ contains a path $P_{i,j}(W_h) \in \LL_{i,j}$ and a path $P_{i',j'}(W_h) \in \LL_{i',j'}$.
If $h = (i,j) \in Z$, then, by definition of $\mathcal{Z}_h$, walk $W_h$ contains a path $P_{i,j}(W_h) \in \LL_{i,j}^\mathcal{Z}$.
Construct a family $\mathcal{Q} = \{P_{i,j} \mid (i,j) \in \mathcal{I}\}$ by, for each $(i, j) \in \mathcal{I}$, choosing an arbitrary walk $W_h$ such that $(i, j) = h$ or $(i, j) \in h$ and putting $P_{i,j} = P_{i, j}(W_h)$.
Note that for each $(i, j) \in \mathcal{I}$ we have $P_{i,j} \in \LL_{i,j}$.
Since two paths $P_{i, j}, P_{i',j'}$ may only share vertices if they stem from the same walk~$W_h$ we have that $\mathcal{Q}$ has congestion at most~$2$.
By applying~\cref{lem:sparse1} to $\mathcal{Q}$, we obtain a bramble of congestion at most $2+2 \cdot 2 = 6$ and of size at least $c \cdot \frac{a^{1/2}}{\log^{1/4} a}$.
By the same calculation as in inequality \eqref{eq:bramble-size-from-lem:sparse2}, this bramble has size at least~$k$, finishing the proof in this~case.

\paragraph{Case~\ref{p:mixed}: \boldmath$|V \setminus V(M_2)| \geq 0.6 |V|$ (large matching anti-adjacent to an independent set).}
We now would like to apply \cref{lem:sparse2} with $\mathcal{I} := V \setminus V(M_2)$ to obtain a bramble of size~$k$ and low congestion.
To apply \cref{lem:sparse2} we check that (i)~$|\mathcal{I}| \geq 0.6 \cdot a \cdot (a - 1)$, which is true since $|\mathcal{I}| = |V \setminus V(M_2)| \geq 0.6 |V| = 0.6 \cdot a \cdot (a - 1)$, that (ii) for every $(i, j) \in \mathcal{I}$ there is a linkage of size $d_1/(2^9 \cdot 5)$ between points in $A_i$ and $B_j$, that (iii) for every two $(i, j), (i', j') \in \mathcal{I}$ the intersection graph of the two linkages is $d_2$-degenerate and that (iv) $d_1/(2^9\cdot 5) > 4 \cdot e \cdot a^2 \cdot d_2$, which clearly holds by definition.

As linkages for point (ii), for each $(i, j) \in \mathcal{I} \cap (Z \cup V(M_1))$ we take the linkage $\LL_{i, j}^\mathcal{Z}$.
Note that $|\LL_{i, j}^\mathcal{Z}| \geq d_1/(2^9 \cdot 5)$.
For each $(i, j) \in \mathcal{I} \setminus (Z \cup V(M_1))$ we take the linkage $\LL_{i, j}'$.
For convenience we denote $\LL_{i, j}^\mathcal{Z} := \LL_{i, j}'$.
Note that $|\LL_{i, j}^\mathcal{Z}| \geq d_1/(2^9 \cdot 5)$ as well.

For point~(iii), observe that, for each $(i, j), (i', j') \in Z \cup V(M_1)$, the intersection graph of the linkages~$\LL_{i, j}^\mathcal{Z}, \LL_{i',j'}^\mathcal{Z}$ is $d_3$-degenerate (and thus $d_2$-degenerate) because the paths in these linkages are contained as subwalks in $\mathcal{Z}_{\{(i, j), (i', j')\}}$ and by \cref{cl:zf-degen}.
For each $(i, j) \in \mathcal{I}$ and $(i', j') \in \mathcal{I} \setminus (Z \cup V(M_1))$ the intersection graph between $\LL_{i, j}^\mathcal{Z}$ and $\LL_{i', j'}^\mathcal{Z}$ is $d_2$-degenerate because otherwise $M_2$ would not be maximum.
Hence also point~(iii) holds, and it follows that \cref{lem:sparse2} is applicable.

From \cref{lem:sparse2} we obtain a bramble of congestion at most~$4$ and size at least $c \cdot \frac{a^{1/2}}{\log^{1/4} a}$, which is at least $k$ by inequality~\eqref{eq:bramble-size-from-lem:sparse2}.


\bibliographystyle{plainurl}
\bibliography{../main}
\end{document}